\newtheorem{theorem}{Theorem}[section]
\newtheorem{lemma}[theorem]{Lemma}
\newtheorem{proposition}[theorem]{Proposition}
\theoremstyle{definition}
\theoremstyle{remark}
\numberwithin{equation}{section}
\newcommand{\R}{\mathbb R}
\newcommand{\RR}{\mathbb R}
\newcommand{\E}{\mathbb E}
 \newcommand{\FF}{\mathscr{F}}
 \newcommand{\cF}{\mathcal{F}}
\newcommand{\HH}{\mathfrak{H}}
 \newcommand{\PP}{\mathbb P}
\newcommand{\LL}{\pmb \ell}
\newcommand{\1}{\textbf{1}}
 \newcommand{\wt}{\widetilde}
 \newcommand{\wh}{\widehat}
\begin{document}

\title[CLTs for stochastic wave equations]{Central limit theorems  for  stochastic wave equations in dimensions one and two}

 \date{\today}

\author[D. Nualart]{David Nualart$^{\ast,1}$}

\author[G. Zheng]{Guangqu Zheng$^{\ast,2}$}

\maketitle

\vspace{-0.5cm}

\begin{center}  {\small \textit{University of Kansas}$^\ast$; nualart@ku.edu$^1$, zhengguangqu@gmail.com$^2$ } \end{center}

\begin{abstract} Fix $d\in\{1,2\}$,
we consider a $d$-dimensional stochastic wave  equation driven by a   Gaussian    noise, which is temporally white and  colored in space such that  the spatial correlation function is integrable and satisfies   Dalang's condition. In this setting, we provide   quantitative central limit theorems for the spatial average of the solution over a Euclidean ball, as the radius of the ball diverges to infinity.  We also establish  functional central limit theorems.  A fundamental ingredient in our analysis is the pointwise $L^p$-estimate for the Malliavin derivative of the solution, which is of independent interest.  This paper is another addendum to the recent research line of averaging stochastic partial differential equations.

  \end{abstract}

\medskip\noindent
{\bf Mathematics Subject Classifications (2010)}: 	60H15, 60H07, 60G15, 60F05.

\medskip\noindent
{\bf Keywords:} Stochastic wave equation, Dalang's condition, central limit theorem, Malliavin-Stein method. 

\allowdisplaybreaks

\section{Introduction}
In this article, we fix $d\in\{1,2\}$ and
consider the  stochastic wave equation  
\begin{equation}  \label{2dSWE}
\dfrac{\partial^2 u}{\partial t^2} = \Delta u + \sigma(u)   \dot{W},     \\
\end{equation}
on $\R_+\times \R^d$ with initial conditions 
$ u(0,x)=1$ and $ \frac {\partial  u} {\partial t} (0,x)=0 $,  where $\Delta$ is Laplacian in space variables and $\dot{W}$ is a centered Gaussian  noise with
covariance
\begin{equation}
\E[ \dot{W}(t,x)\dot{W}(s,y)    ] =  \delta_0(t-s) \gamma( x-y).    \label{cov}
\end{equation}
Here  $ \dot{W}$  is a distribution-valued field  and will be formally introduced in Section \ref{sub22}.

\medskip 

\emph{Throughout this article}, we fix the following conditions:
\begin{itemize}
 \item[\textbf{(C1)}]  $\sigma:\R\to\R$ is Lipschitz continuous with Lipschitz constant $L\in(0,\infty)$.
 
 \item[\textbf{(C2)}] $\gamma$ is a tempered  nonnegative and nonnegative definite function, whose Fourier transform  $\mu$  satisfies  Dalang's condition:
  \begin{align}\label{DalangC}
  \int_{\R^d} \frac{\mu(dz )}{1+  | z| ^2} < \infty,
 \end{align}
 where $ | \cdot | $ denotes the Euclidean norm on $\R^d$.
 \item[\textbf{(C3)}]   $\sigma(1) \neq 0$.
\end{itemize}
Conditions  \textbf{(C1)} and  \textbf{(C2)}   ensure that    equation \eqref{2dSWE} has a unique  \emph{random field solution}, which is adapted to the filtration generated by $W$, such that $\sup\big\{  \E \big[  \vert u(t,x)\vert^k\big]:   (t,x) \in [0,T]\times \R^d\big\}      $ is finite for all  $T\in(0,\infty)$ and  $k\ge 2$,  and  
\begin{equation}\label{mild}
 u(t,x) = 1+    \int_0^t \int_{\R^d}  G_{t-s}(x-y) \sigma(u(s,y))W(ds,dy), 
\end{equation}
where   the above stochastic integral is defined in the sense of Dalang-Walsh and $G_{t-s}(x-y)$ denotes the fundamental solution to the corresponding deterministic  wave equation, \emph{i.e.} 
\begin{align}\label{fundamentalS}
G_t(x) :=\begin{cases}
  \dfrac{1}{2} \1_{\{ | x| < t\}} ,   &\text{if $d=1$} \\
 \dfrac{1}{2\pi \sqrt{t^2 - | x|^2}} \1_{\{| x | <t \}},  &\text{if $d=2$};
\end{cases}
\end{align}
see   \cite{Dalang99, Dalang}.   Condition  \textbf{(C3)}  excludes the trivial case where $u(t,x) =1$, see Section \ref{Picard}.
 
 \medskip
 We are interested in the behavior of the solution to  \eqref{2dSWE} in the space variable, and the next result provides relevant stationarity and ergodicity properties.
 
 \begin{proposition} \label{prop1}  Suppose that  $\gamma$  satisfies  $\gamma \in L^1(\R)$ if $d=1$ and $\gamma \in L^1(\R^2) \cap L^{\LL}(\R^2)$ for some $\LL >1$ if  $d=2$.
 Fix $t>0$. Then, the random field $\{ u(t,x), x\in \R^d\}$ have the following properties: 
 \begin{itemize}
 
\item[(i)] it is strictly stationary:  The finite-dimensional distributions of  $\{ u(t,x+y), x\in \R^d\}$ does not depend on $y\in\R^d$;

\item[(ii)]  it is ergodic.

\end{itemize}
We postpone the proof to Section \ref{SEC3}.
 \end{proposition}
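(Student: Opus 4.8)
The plan is to derive both properties from the spatial translation invariance of the covariance \eqref{cov}. \emph{Stationarity.} For $z\in\R^d$ let $\tau_z$ denote the spatial shift $(\tau_z h)(s,x)=h(s,x-z)$ acting on the Hilbert space $\HH$ canonically associated with the noise $W$. Because $\gamma(x-y)$ depends only on $x-y$, a change of variables shows $\langle \tau_z h,\tau_z g\rangle_\HH=\langle h,g\rangle_\HH$, so $\tau_z$ is a surjective isometry of $\HH$ and therefore induces a measure-preserving transformation $T_z$ of the underlying probability space characterized by $W(\tau_z h)=W(h)\circ T_z$. First I would establish the translation-covariance identity
\begin{equation*}
u(t,x+z)=u(t,x)\circ T_z \qquad\text{for all } x,z\in\R^d.
\end{equation*}
The cleanest route is via the Picard scheme $u_0\equiv 1$ and $u_{n+1}(t,x)=1+\int_0^t\!\int_{\R^d}G_{t-s}(x-y)\sigma(u_n(s,y))\,W(ds,dy)$: assuming the identity for $u_n$, the substitution $y\mapsto y+z$ in the Walsh integral, together with $G_{t-s}(x+z-y)=G_{t-s}(x-(y-z))$ and the shift-covariance of the noise, propagates it to $u_{n+1}$, and it survives the $L^2(\Omega)$-limit $u_n\to u$. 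Since $T_z$ preserves $\PP$, this identity immediately gives that the finite-dimensional distributions of $\{u(t,x+z)\}_{x}$ do not depend on $z$, which is (i).

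\emph{Ergodicity.} The covariance identity with $x=0$ exhibits the field as a factor of a dynamical system, $u(t,x)=\Phi\circ T_x$ with $\Phi:=u(t,0)$, so it suffices to show that the translation group $(T_x)_{x\in\R^d}$ acts ergodically; I would in fact prove the stronger mixing property $\E[F\,(G\circ T_z)]\to\E[F]\,\E[G]$ as $|z|\to\infty$, which passes to any factor. By the Wiener chaos decomposition of $L^2(\Omega)$ this reduces, via the standard Gaussian mixing criterion, to checking that $\langle\tau_z h,g\rangle_\HH\to0$ as $|z|\to\infty$ for $h,g$ in a dense subclass of $\HH$. Passing to spatial Fourier variables,
\begin{equation*}
\langle\tau_z h,g\rangle_\HH=\int_0^\infty\!\!\int_{\R^d} e^{-\ci z\cdot\xi}\,\cF h(s,\xi)\,\overline{\cF g(s,\xi)}\,\mu(d\xi)\,ds,
\end{equation*}
and here the hypothesis $\gamma\in L^1$ is decisive: it forces $\mu$ to be absolutely continuous with bounded continuous density $\wh\gamma$, so for $h,g$ whose Fourier transforms are, say, compactly supported the integrand lies in $L^1(ds\,d\xi)$ and the Riemann--Lebesgue lemma gives the desired decay. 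The uniform bound $|\langle\tau_z h,g\rangle_\HH|\le\|h\|_\HH\|g\|_\HH$ then extends the convergence to all $h,g\in\HH$ by density, yielding mixing and hence ergodicity of $(T_x)_x$ and of $\{u(t,x)\}_x$.

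The routine parts are the ergodicity-from-mixing reduction and the density extension. The hard part will be twofold: first, justifying the translation-covariance identity at the level of the Dalang--Walsh stochastic integral (which the Picard induction and the $L^2$-limit handle), and second, securing the $L^1$-integrability on the Fourier side that feeds Riemann--Lebesgue. This last point is exactly where the two-dimensional hypothesis $\gamma\in L^{\LL}(\R^2)$ with $\LL>1$ enters: it provides, through Young's inequality applied to the convolutions with the fundamental solution $G$, the integrability of the kernels needed for the density $\wh\gamma$ to be controlled and the argument to close.
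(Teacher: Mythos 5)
Your proof is correct, but for part (ii) it takes a genuinely different route from the paper. Stationarity is handled the same way in both (shift-covariance of the noise propagated through the Picard scheme; the paper cites Lemma~7.1 of \cite{CKNP19} and a footnote of \cite{DNZ18} for exactly this argument). For ergodicity, the paper does \emph{not} prove mixing of the noise: it invokes the criterion of \cite[Lemma~7.2]{CKNP19}, which reduces ergodicity to showing $V_R(t)={\rm Var}\big(R^{-d}\int_{B_R}\prod_j g_j(u(t,x+\zeta^j))\,dx\big)\to 0$, and then controls $V_R(t)$ by the Poincar\'e inequality \eqref{Poincare} combined with the chain rule and the pointwise derivative bound $\|D_{s,y}u(t,x)\|_2\lesssim G_{t-s}(x-y)$ of Theorem \ref{THM}; this yields the quantitative decay $V_R(t)\lesssim R^{-d}$, but at the cost of relying on the paper's hardest technical result. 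Your argument instead proves the stronger property that the shift group on the Gaussian noise is mixing, via the Maruyama-type spectral criterion on Wiener chaoses, the absolute continuity of $\mu$ with bounded continuous density $\widehat\gamma$ (from $\gamma\in L^1$), Riemann--Lebesgue on a dense subclass, and a density extension using $|\langle\tau_z h,g\rangle_\HH|\le\|h\|_\HH\|g\|_\HH$; ergodicity then passes to the factor $u(t,x)=u(t,0)\circ T_x$. This buys a stronger conclusion (mixing) under a weaker hypothesis and avoids Theorem \ref{THM} entirely, whereas the paper's route buys a quantitative covariance estimate consistent with the toolkit used for Theorem \ref{MAIN}. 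One inaccuracy in your closing paragraph: the hypothesis $\gamma\in L^{\LL}(\R^2)$, $\LL>1$, plays no role in your argument --- Riemann--Lebesgue needs only $\gamma\in L^1$, since $\cF h(s,\cdot)\,\overline{\cF g(s,\cdot)}\in L^1(\mu)$ already for $h,g$ in your dense class; in the paper that hypothesis is needed only to prove the 2D derivative estimate \eqref{IMP} (via Lemma \ref{LEM51}), on which the paper's ergodicity proof, but not yours, depends. This is a misattribution rather than a gap, and your proof stands without that sentence.
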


 We define for each $t\in[0,\infty)$,
\begin{equation} \label{F_R}
F_R(t) =  \int_ {B_R}  (  u(t,x) -1 )~dx,
\end{equation} 
where $B_R= \{ x\in\R^d: | x| \leq R\}$. 
 From   Proposition \ref{prop1}  it follows, applying the Ergodic Theorem, that 
\begin{align}\label{conseq:Erg}
F_R(t) \xrightarrow{R\to\infty} 0 ~\text{almost surely and  in $L^p(\Omega)$ for any $p\geq 1$.}
 \end{align}
 Thus, it is a natural problem to investigate the corresponding central limit theorem (CLT).
 
     We denote the standard Gaussian distribution  by $\mathcal{N}(0, 1)$ and the   $L^p(\Omega)$-norm by $\|\cdot\|_p$ for any $p\ge 1$.   Also,  $\omega_d$ denotes the volume of the unit ball, that is, $\omega_d= 2$ for $d=1$ and $\omega_d= \pi$ for $d=2$.
   We put $a\lesssim b$ if  $a\le Cb$ for some constant $C>0$.

   \medskip
   
In what follows, we present the main result of this article. 
 
   \begin{theorem}\label{MAIN}  
      Suppose that  $\gamma$  satisfies  $\gamma \in L^1(\R)$ if $d=1$ and $\gamma \in L^1(\R^2) \cap L^{\LL}(\R^2)$ for some $\LL >1$ if  $d=2$.
       Then the following statements hold:
      \begin{itemize}
      \item[(i)]  The  process $\big\{ R^{-d/2} F_R(t): t\geq 0\big\}$ converges in law to a centered continuous Gaussian process $\big\{\mathcal{G}_t: t\geq 0\big\}$, where 
   \[
   \E\big[ \mathcal{G}_{t_1}  \mathcal{G}_{t_2} \big]   = \omega_d    \int_{\R^d} \text{\rm Cov} ( u(t_1,\xi),  u(t_2,0) )  d\xi.
   \]
      \item[(ii)]  For any fixed $t>0$,  
    \begin{equation}\label{QCLT}
    d_{\rm TV}\big(  F_R(t)/\sigma_R, \mathcal{N}(0, 1)   \big) \lesssim R^{-d/2},
    \end{equation}
  where  $\sigma_R^2 :=  {\rm Var}(F_R(t))   > 0$ for every $R>0$ is part of the conclusion and $d_{\rm TV}$ stands for the total variation distance.  
  \end{itemize}
   \end{theorem}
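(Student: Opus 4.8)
The plan is to apply the Malliavin--Stein method. The key starting observation is that, by the mild formulation \eqref{mild} together with a stochastic Fubini argument, $F_R(t)$ is a Skorokhod integral, $F_R(t)=\delta(v_R)$, where
\[
v_R(s,y)=\1_{[0,t]}(s)\,\sigma\big(u(s,y)\big)\int_{B_R}G_{t-s}(x-y)\,dx .
\]
Since $F_R(t)\in\D^{1,2}$ with $\E[F_R(t)]=0$ and $D_{s,y}F_R(t)=\int_{B_R}D_{s,y}u(t,x)\,dx$, the Malliavin--Stein inequality combined with this representation reduces the estimate \eqref{QCLT} to two ingredients: a lower bound $\sigma_R^2\gtrsim R^d$ and an upper bound ${\rm Var}\big(\langle DF_R(t),v_R\rangle_{\HH}\big)\lesssim R^d$, which together give
\[
d_{\rm TV}\big(F_R(t)/\sigma_R,\mathcal N(0,1)\big)\lesssim \frac{\sqrt{R^d}}{R^d}=R^{-d/2}.
\]

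For the variance lower bound I would invoke the strict stationarity from Proposition \ref{prop1} to write $\sigma_R^2=\int_{B_R}\int_{B_R}{\rm Cov}\big(u(t,x),u(t,x')\big)\,dx\,dx'$, and then, after a change of variables, show that $R^{-d}\sigma_R^2$ converges to $\omega_d\int_{\R^d}{\rm Cov}\big(u(t,\xi),u(t,0)\big)\,d\xi$. The absolute convergence of this integral follows from $\gamma\in L^1$ and the pointwise $L^p$-estimate for the Malliavin derivative, while its strict positivity---and hence $\sigma_R^2>0$ for every $R>0$---is precisely where condition \textbf{(C3)} is used, as $\sigma(1)\neq0$ prevents the covariance from degenerating.

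The technical core is the upper bound. Writing $\langle DF_R(t),v_R\rangle_{\HH}$ as an integral over $[0,t]\times\R^{2d}$ weighted by $\gamma$, its variance unfolds into a multiple integral involving the first and second Malliavin derivatives of $u$. I would control every factor by the pointwise estimate $\|D_{s,y}u(t,x)\|_p\lesssim G_{t-s}(x-y)$ announced in the abstract, together with the integrability of $\gamma$ and elementary bounds on the spatial integrals of $G$. The decisive mechanism is that the four spatial integrations over $B_R$ are coupled only through factors $\gamma(\cdot-\cdot)$; since $\gamma\in L^1$, each such factor localizes a pair of spatial variables and thereby absorbs one power of $R^d$, so that a single factor $R^d$ survives in the end. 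I expect this step to be the main obstacle, and in dimension $d=2$ the singularity of $G$ in \eqref{fundamentalS} forces an appeal to the additional hypothesis $\gamma\in L^{\LL}(\R^2)$ through H\"older's inequality, which is the most delicate point.

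Finally, for the functional statement (i) I would first prove convergence of the finite-dimensional distributions via the multivariate Malliavin--Stein (Peccati--Tudor) criterion: one checks that the covariance matrix of $\big(R^{-d/2}F_R(t_1),\dots,R^{-d/2}F_R(t_k)\big)$ converges to the Gaussian covariance asserted in the theorem, while the associated remainder terms of the type estimated above tend to zero. Tightness of $\{R^{-d/2}F_R(t):t\ge0\}$ in the space of continuous functions would then follow from Kolmogorov's criterion once one establishes a moment bound of the form $\big\|R^{-d/2}\big(F_R(t)-F_R(s)\big)\big\|_p\lesssim|t-s|^{\beta}$ for some $\beta>0$ and all large $p$, which again rests on the pointwise $L^p$-estimate for $Du$ and the explicit form \eqref{fundamentalS} of the fundamental solution.
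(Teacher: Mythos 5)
Your overall architecture coincides with the paper's: the representation $F_R(t)=\delta(V_{t,R})$ with $V_{t,R}(s,y)=\varphi_{t,R}(s,y)\sigma(u(s,y))$, the univariate Malliavin--Stein bound for part (ii), a multivariate Malliavin--Stein bound for the finite-dimensional convergence, the covariance limit via stationarity plus the Poincar\'e inequality and the $Du$ estimate, and tightness via the Kolmogorov--Chentsov criterion with an increment bound of order $(t-s)^{1/d}$. However, there is a genuine gap at the step you yourself identify as the technical core: you propose to unfold $\mathrm{Var}\big(\langle DF_R(t),v_R\rangle_{\HH}\big)$ into a multiple integral involving the \emph{first and second} Malliavin derivatives of $u$, and then to control every factor by $\|D_{s,y}u(t,x)\|_p\lesssim G_{t-s}(x-y)$. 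That estimate says nothing about $D^2u$, and under condition \textbf{(C1)} alone---$\sigma$ merely Lipschitz---the solution is not known to be twice Malliavin differentiable: the chain rule \eqref{cr} produces only a bounded $\sigma\{F\}$-measurable factor, which cannot be differentiated again. So the route through second derivatives (e.g.\ applying the Poincar\'e inequality directly to $\langle DF_R,v_R\rangle_{\HH}$) would stall as written.

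The paper circumvents this by differentiating the mild equation, which yields
\[
D_{s,y}F_R(t)=\varphi_{t,R}(s,y)\sigma(u(s,y))+\int_s^t\int_{\R^d}\varphi_{t,R}(r,z)\,\Sigma_{r,z}\,D_{s,y}u(r,z)\,W(dr,dz),
\]
and hence the splitting $\langle DF_R(t_1),V_{t_2,R}\rangle_{\HH}=A_1+A_2$; the variances of $A_1$ and $A_2$ are then bounded, following the strategy of \cite{BNZ20}, using BDG/It\^o-type tools and \emph{only} the first-derivative estimate, reducing everything to the deterministic integrals $\mathcal{T}_s$, $\mathcal{U}_s$, where the localization mechanism you correctly describe (each factor $\gamma\in L^1$ absorbs one power of $R^d$, and $\gamma\in L^{\LL}(\R^2)$ enters via H\"older/Young in $d=2$) finishes the job. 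Two smaller inaccuracies: positivity of the limiting variance gives $\sigma_R^2\gtrsim R^d$ only for $R$ large, so the claim ``$\sigma_R^2>0$ for every $R>0$'' does not follow with a ``hence'' but needs its own short argument from \textbf{(C3)}; and the tightness estimate does not rest on the $Du$ bound at all---it follows from BDG, the uniform moment bounds of $u$, and the deterministic bound $|\varphi_{t,R}(r,y)-\varphi_{s,R}(r,y)|\lesssim(t-s)^{1/d}\1_{\{|y|\leq 2R\}}$.
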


In the sequel, we   sketch the ``\textit{usual proof strategy}'' and highlight the key ingredients.  The proof of  the functional CLT consists in proving the \emph{f.d.d.}   convergence and     tightness. We appeal to the tightness criterion of Komogorov-Chentsov (see \emph{e.g.} \cite{Kunita86})  and prove  tightness by obtaining moment estimate of the increments $F_R(t)-F_R(s)$. For the \emph{f.d.d.} convergence, we first derive the asymptotic variance and then apply the so-called Malliavin-Stein approach to show the \emph{f.d.d.} convergence. More precisely, we need a multivariate Malliavin-Stein bound for this purpose, while the univariate Malliavin-Stein bound   provides the rate   for the marginal convergence that is described by the total-variation distance.  It is worth remarking that as a tailor-made combination of Malliavin calculus and Stein's method initiated by Nourdin and Peccati, the Malliavin-Stein approach has proved to be a very useful toolkit in establishing Gaussian fluctuations in various frameworks, notably for the functionals over a Gaussian field, see the recent monograph \cite{NP} by the tailors for a comprehensive treatment.

That being said,   we will use the Malliavin calculus intensively for our computations and inevitably, we will encounter random variables of the form $D_{s,y}u(t,x)$. Note that $Du(t,x)$ denotes the Malliavin derivative of $u(t,x)$, which lives in the Hilbert space $\HH$ associated to the noise $W$; see section \ref{SEC2} for precisely definitions. The space $\HH$ may contain generalized functions, so to estimate $L^p$-norm of $D_{s,y}u(t,x)$, we shall first clarify that $D_{s,y}u(t,x)$ is indeed a real function in $(s,y)$. Moreover, we need to prove  an estimate of the form
\[
\|D_{s,y}u(t,x)\|_p \lesssim G_{t-s}(x-y)
\]
in order to proceed    with our computations for asymptotic variance and  \emph{f.d.d.} convergence.  This is the contents of the following theorem.

        \begin{theorem}\label{THM}  Let the   assumptions   in Theorem \ref{MAIN} prevail.    For any $p\in[2,\infty)$ and any $t>0$, the following estimates hold  for almost all $(s,y) \in [0,t] \times \R^d:$        
 \begin{align}
 G_{t-s}(x-y) \| \sigma(u(s,y))\|_p \leq   \big\| D_{s,y} u(t,x) \big\|_p  \leq C_{p,t,L,\gamma} \kappa_{p,t,L} G_{t-s}(x-y), \label{IMP}
 \end{align} 
 where the constant  $ \kappa_{p,t,L}$ is defined in \eqref{sigmap} and the constant  $C_{p,t,L,\gamma}$ is given by \eqref{def:CPTL} in  the 1D case and by  \eqref{def:CPTL2} in  the 2D case.  
    \end{theorem}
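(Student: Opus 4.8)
The plan is to work with the linear stochastic integral equation satisfied by the Malliavin derivative. Applying $D_{s,y}$ to the mild formulation \eqref{mild} and using the chain rule together with the commutation of the derivative and the Dalang--Walsh integral, one expects that for fixed $(s,y)$ the process $Z(t,x):=D_{s,y}u(t,x)$ solves
\[
Z(t,x) = G_{t-s}(x-y)\,\sigma(u(s,y)) + \int_s^t\!\!\int_{\R^d}G_{t-r}(x-z)\,\Phi(r,z)\,Z(r,z)\,W(dr,dz), \qquad t>s,
\]
where $\Phi(r,z)$ is an adapted process with $|\Phi(r,z)|\le L$, equal to $\sigma'(u(r,z))$ when $\sigma$ is differentiable, furnished by the Lipschitz chain rule. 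Before using this, I would first settle the measurability issue flagged above. To this end I would run the Picard scheme $u_0\equiv1$, $u_{n+1}(t,x)=1+\int_0^t\!\int G_{t-r}(x-z)\sigma(u_n(r,z))\,W(dr,dz)$, check inductively that each $u_n(t,x)\in\D^{1,p}$ with $D_{s,y}u_n(t,x)$ a genuine function of $(s,y)$ solving the analogous recursion, and prove the estimates below \emph{uniformly in $n$}. Since $u_n(t,x)\to u(t,x)$ in $\D^{1,p}$ and $\D^{1,p}$ is closed, the uniform bounds pass to the limit and simultaneously certify that $D_{s,y}u(t,x)$ is represented by a function satisfying the displayed equation.

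The lower bound is the soft part. The first summand $G_{t-s}(x-y)\sigma(u(s,y))$ is $\mathcal{F}_s$-measurable, whereas the stochastic integral is taken over the time window $(s,t]$ with an adapted integrand, so its conditional expectation given $\mathcal{F}_s$ vanishes. Applying the conditional Jensen inequality to the convex function $|\cdot|^p$ gives $\E\big[|Z(t,x)|^p\mid\mathcal{F}_s\big]\ge \big|G_{t-s}(x-y)\sigma(u(s,y))\big|^p$ pointwise, and taking expectations and $p$-th roots yields $\|Z(t,x)\|_p\ge G_{t-s}(x-y)\|\sigma(u(s,y))\|_p$, the left inequality in \eqref{IMP}.

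For the upper bound I would take $L^p(\Omega)$-norms in the equation, bound the first term using $\|\sigma(u(s,y))\|_p\le\kappa_{p,t,L}$ (finiteness of the moments of $u$), and estimate the stochastic integral by the Burkholder--Davis--Gundy inequality for Walsh integrals followed by Minkowski's inequality. Writing $g(t,x):=\|Z(t,x)\|_p$ and using $|\Phi|\le L$, this produces a quadratic integral inequality of the form
\[
g(t,x)^2 \lesssim \kappa_{p,t,L}^2\,G_{t-s}(x-y)^2 + pL^2\int_s^t\!\!\int_{\R^d}\!\!\int_{\R^d} G_{t-r}(x-z)\,G_{t-r}(x-z')\,g(r,z)\,g(r,z')\,\gamma(z-z')\,dz\,dz'\,dr.
\]
In dimension one the argument then closes cleanly: since $G$ is a multiple of an indicator, the AM--GM inequality and $\gamma\in L^1(\R)$ reduce the bilinear term to a constant times $\int_s^t\!\int_\R G_{t-r}(x-z)g(r,z)^2\,dz\,dr$; iterating this linear inequality and using the reproducing bound $\int_\R G_{t-r}(x-z)G_{r-s}(z-y)\,dz\le(r-s)\,G_{t-s}(x-y)$ produces a convergent (Mittag--Leffler type in $t-s$) series bounded by $G_{t-s}(x-y)^2$ times a constant, and the identity $\sqrt{G}=\sqrt2\,G$ valid for indicator kernels converts the resulting $\sqrt{G_{t-s}(x-y)}$ back into $G_{t-s}(x-y)$, giving the right-hand side of \eqref{IMP} with the constant \eqref{def:CPTL}.

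The main obstacle is the two-dimensional upper bound, where the wave kernel is no longer bounded but carries the integrable singularity $(t^2-|\cdot|^2)^{-1/2}\1_{\{|\cdot|<t\}}$. There one cannot extract $G_{t-r}(x-z')$ in sup-norm; instead I would invoke $\gamma\in L^1(\R^2)\cap L^{\LL}(\R^2)$ and apply H\"older's/Young's inequality to absorb the kernel singularity against the spatial correlation, estimating the inner convolution $\int_{\R^2} G_{t-r}(x-z')\gamma(z-z')\,dz'$ in an $L^q$ sense tuned to the local integrability of $G_{t-r}$. This is the delicate step: it pins down the admissible exponent $\LL$, forces the more involved Gronwall iteration underlying the constant $C_{p,t,L,\gamma}$ in \eqref{def:CPTL2}, and is where the dimension-dependent form of \eqref{IMP} genuinely enters. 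Throughout, all estimates must be carried out on the Picard iterates so that the limit passage of the previous paragraph is legitimate.
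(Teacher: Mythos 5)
Your skeleton is the paper's: Picard iterates $u_n$, the linear recursion for $D_{s,y}u_{n+1}(t,x)$ with the bounded adapted factor $\Sigma^{(n)}$ from the Lipschitz chain rule, BDG plus Minkowski, and the lower bound by conditioning on $\FF_s$ (the paper phrases this through the Clark--Ocone formula, but killing the stochastic integral over $(s,t]$ by conditional expectation plus conditional Jensen is the same computation). Your 1D closure also works and is in substance the paper's argument: since $u_0\equiv 1$ gives $Du_0=0$, iterating your linear inequality terminates after $n$ steps and reproduces the paper's finite expansion into iterated integrals $T^{(n)}_k$ (see \eqref{finiteit}), with factorial decay coming from \eqref{G1} and $\gamma\in L^1(\R)$, and your observation $\sqrt{G}=\sqrt{2}\,G$ for the indicator kernel is exactly how one returns to $G_{t-s}(x-y)$. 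The limit passage is slightly misstated --- one does not have convergence in $\D^{1,p}$ norm; the paper extracts weak limits of $Du_{n_k}(t,x)$ in $L^p(\Omega;\HH)$ and, to identify the derivative as a genuine function and transfer the pointwise bound a.e., also in $L^p(\Omega;L^q(\R_+\times\R^d))$ for $q\in(1,2)$, following step 4 of \cite{BNZ20} --- but that is fixable with standard tools you essentially allude to.

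The genuine gap is the 2D upper bound, which is the heart of the theorem and which you explicitly leave as ``the delicate step.'' The difficulty is not just choosing exponents in H\"older/Young: your proposed Gronwall scheme with the self-reproducing ansatz $g(r,z)\lesssim G_{r-s}(z-y)$ cannot close as stated, for two concrete reasons. First, $G^2_{r-s}\notin L^1_{loc}(\R^2)$ (the integral of $((r-s)^2-|z|^2)^{-1}$ over the light cone diverges logarithmically), so the quadratic term cannot absorb the ansatz directly. Second, and more structurally, one application of H\"older/Young with $q=\LL/(2\LL-1)$ and the convolution estimates of Lemma \ref{LEM51} is \emph{not} self-reproducing: it turns the bilinear term built from $G_{r-s}$ into a bound of order $(t-s)^{(\LL-1)/\LL}\,G^{1/\LL}_{t-s}(x-y)$, a strictly smaller power of $G$, and each further iteration degrades the exponent again, $G\to G^{1/\LL}\to G^{1/(2\LL)}\to \1_{\{|x-y|<t-s\}}$ (see \eqref{EQ16}, \eqref{EST:Khat}, \eqref{EST:Ktilde}). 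The paper's mechanism, which your sketch is missing, is to expand $Du_{n+1}$ as the finite sum $\sum_{k=0}^n T^{(n)}_k$, estimate $k=1,2,3$ separately through these three progressively degraded kernel bounds, use $G_{t-s}\geq (2\pi t)^{-1}$ on its support to convert fractional powers of $G$ back into $G$ itself, and then, once the indicator is reached, bound all remaining $k-3$ convolutions by $\mathfrak{m}_t^{k-3}$ with the time-simplex factor $1/(k-3)!$, so that the series defining $C_{p,t,L,\gamma}$ in \eqref{def:CPTL2} converges for every $t$ (no small-time restriction, which a naive fixed-point contraction would impose). Without this three-step degradation-and-stabilization device, or an equivalent one, the 2D half of \eqref{IMP} remains unproved in your proposal.
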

    
    Let us compare our result with similar   estimates in the literature.  There have been several recent works on the application of the Malliavin-Stein  approach to establish central limit theorems for spatial  averages of stochastic partial differential equations and to derive quantitative error bound in the total variation distance.
  A fundamental ingredient  in all these papers is an  upper bound similar to \eqref{IMP}. The works   \cite{HNV18}  and  \cite{HNVZ19} deal with the stochastic heat equation
  with $d=1$ and $\gamma=\delta_0$ and $d \ge 1$ and  $\gamma(x)= |x|^{-\beta}$, $0<\beta < \min(2,d)$ (Riesz kernel covariance), respectively. 
For the stochastic heat equation, an upper bound of the form
    \eqref{IMP},  holds with $G$ being the heat kernel. In this case, the proof   relies heavily on the semigroup property of the heat kernel.
    
    For the wave equation,   the works
 \cite{DNZ18, BNZ20}  establish the Gaussian fluctuation of spatial averages of stochastic wave equations in the  following cases:
 $d=1$ and   $\gamma(x)= |x|^{2H-2}$,  $H\in [1/2,1)$,  and  $d=2$ and $\gamma(x)= |x|^{-\beta}$, $0<\beta < 2$, respectively. In the case $d=1$,  the proof of
 \eqref{IMP} is not very difficult because $G_{t-s}(x-y)$ is uniformly bounded. The case $d=2$ is much more difficult  due to the singularity within the fundamental solution \eqref{fundamentalS}. In the present article, we consider the integrable covariance kernel that requires novel technical estimates and as we can read from Theorem \ref{MAIN}, the order of fluctuation in this case is $R^{d/2}$, which is the same as in the case of parabolic Anderson model driven by integrable covariance kernel \cite{NZ19}.   Our paper can be viewed as another pixel, along with \cite{BNZ20, CKNP19-2, DNZ18, HNV18, HNVZ19, NZ19},   for completing the picture of averaging SPDEs.  It is worth pointing out that the authors of   \cite{BQS2019} considered the 1D linear stochastic wave equation driven by space-time homogeneous Gaussian noise and they obtained a weaker result than \eqref{IMP}. Their methodology is totally different than ours: Due to the linearity, one has the explicit chaos expansion of the solution, then obtaining the upper bound for $\| D_{s,y}u(t,x)\|_p$ reduces to explicit (but very complicated) computations.   And in view of this reference, we believe our bounds in \eqref{IMP} could be very useful in establishing absolute continuity result for the solution to 2D stochastic wave equation.

The rest of this article is organized as follows: In Section \ref{SEC2} we present preliminary results for our proofs, and Sections \ref{SEC3} \ref{SEC:Proof11}, \ref{SEC:Proof12} are devoted to the proofs of  Proposition \ref{prop1}, Theorem \ref{MAIN} and Theorem \ref{THM} respectively.

    \section{Preliminaries} \label{SEC2}
 
   In this section we present some preliminaries on stochastic analysis, Malliavin calculus and the Stein-Malliavin approach to normal approximations.
  
     \subsection{Basic stochastic analysis} \label{sub22}

Let  $\HH$ be defined  as the completion of $C_c(\R_+\times\R^d)$ under the inner product 
\[
\langle f, g\rangle_\HH = \int_{\R_+\times\R^{2d}} f(s,y) g(s, z) \gamma(y-z)dydzds =\int_{\R_+} \left(\int_{\R^d} \FF f(s,\xi) \FF g(s, -\xi) \mu(d\xi)\right)ds, 
\]
where $\FF f(s,\xi)=\int_{\R^d} e^{-i x\cdot \xi} f(s,x)dx$.

Consider  an isonormal Gaussian process associated to the Hilbert space $\HH$, denoted by $W=\big\{ W(\phi): \phi\in \HH \big\}$. That is, $W$ is a centered  Gaussian family of random variables such that
$
\E\big[ W(\phi) W(\psi) \big] = \langle \phi, \psi\rangle_\HH$ for any   $\phi, \psi\in \HH$.
As the noise $W$ is white in time, a martingale structure naturally appears. First we define $\cF_t$ to be the $\sigma$-algebra generated by $\PP$-negligible  sets and $\big\{ W(\phi): \phi\in C(\R_+\times\R^d)$ has compact support contained in $[0,t]\times\R^d \big\}$, so we have a filtration $\mathbb{F}=\{\cF_t: t\in\R_+\}$. If $\big\{\Phi(s,y): (s,y)\in\R_+\times\R^d\big\}$ is  an $\mathbb{F}$-adapted  random field such that $\E\big[ \| \Phi\|_\HH^2 \big] <+\infty$, then 
\[
M_t = \int_{[0,t]\times\R^d} \Phi(s,y)W(ds,dy),
\]
interpreted as the Dalang-Walsh integral (\cite{Dalang99,Walsh}), is a square-integrable $\mathbb{F}$-martingale with quadratic variation  
\[
\langle M \rangle_t = \int_{[0,t]\times\R^{2d}} \Phi(s,y) \Phi(s,z) \gamma(y-z) dy dzds.
\]
Let us record a useful version of Burkholder-Davis-Gundy inequality (BDG for short); see  {\it e.g.} \cite[Theorem B.1]{Khoshnevisan}.

\begin{lemma} \label{BDG}  If $\big\{\Phi(s,y): (s,y)\in\R_+\times\R^d\big\}$ is an adapted random field with respect to $\mathbb{F}$ such that $ \| \Phi\|_\HH \in L^p(\Omega)$ for some $p\geq 2$, then
\[
\left\|  \int_{[0,t]\times\R^d} \Phi(s,y)W(ds,dy) \right\|_p^2 \leq 4p \left\| \int_{[0,t]\times\R^{2d}} \Phi(s,y)\Phi(s,z) \gamma(y-z) dydzds \right\|_{p/2}.
\]
\end{lemma}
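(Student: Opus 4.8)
The plan is to read the Dalang-Walsh integral as a continuous square-integrable martingale in its upper time limit, and then invoke the scalar Burkholder-Davis-Gundy inequality for continuous martingales with the sharp constant that is linear in $p$. Concretely, fix the adapted integrand $\Phi$ with $\|\Phi\|_\HH \in L^p(\Omega)$ and set, for $0 \le r \le t$,
\[
M_r = \int_{[0,r]\times\R^d}\Phi(s,y)\,W(ds,dy).
\]
By the Dalang-Walsh construction recalled just above the statement, $\{M_r: 0\le r\le t\}$ is a continuous $\mathbb{F}$-martingale, and under the hypothesis $\|\Phi\|_\HH\in L^p(\Omega)$ it is $L^p$-bounded; its predictable quadratic variation is the continuous-in-$r$ refinement of the formula already displayed in the text, namely
\[
\langle M\rangle_r = \int_{[0,r]\times\R^{2d}}\Phi(s,y)\Phi(s,z)\gamma(y-z)\,dy\,dz\,ds.
\]

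Granting this, the inequality follows by applying the continuous-martingale BDG bound. First I would note the trivial inequality $|M_t|\le \sup_{0\le r\le t}|M_r|$, and then use the form of BDG in which, for every $p\ge 2$,
\[
\E\big[|M_t|^p\big]\le \E\Big[\sup_{0\le r\le t}|M_r|^p\Big]\le (4p)^{p/2}\,\E\big[\langle M\rangle_t^{p/2}\big].
\]
Raising both sides to the power $2/p$ reproduces precisely the claimed estimate $\|M_t\|_p^2\le 4p\,\|\langle M\rangle_t\|_{p/2}$, since $\|M_t\|_p^2 = \E[|M_t|^p]^{2/p}$ and $\|\langle M\rangle_t\|_{p/2}=\E[\langle M\rangle_t^{p/2}]^{2/p}$. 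This is exactly the statement recorded in \cite[Theorem B.1]{Khoshnevisan}, so in the write-up I would simply quote it after the martingale identification.

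The only genuine point requiring care is the value of the constant, and this is where I expect the main obstacle. The identification of $M$ as a continuous martingale with the stated quadratic variation is routine from the Walsh/Dalang theory and can be taken as known. However, a naive derivation of the moment bound --- applying It\^o's formula to $|M_r|^p$, taking expectations to kill the stochastic integral term, bounding the resulting $\int_0^t|M_r|^{p-2}\,d\langle M\rangle_r$ by $\big(\sup_{r\le t}|M_r|\big)^{p-2}\langle M\rangle_t$, and then combining H\"older's inequality with Doob's $L^p$ maximal inequality --- yields a constant of order $p^2$ rather than the sharp order $p$ appearing in $4p$. To obtain the linear-in-$p$ constant one must use the refined continuous-martingale argument (for instance a Dambis-Dubins-Schwarz time change reducing $M$ to a Brownian motion evaluated at the random time $\langle M\rangle_t$, together with Gaussian moment estimates, or Burkholder's sharp good-$\lambda$ method). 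Since this sharp constant is precisely the content of the cited reference, I would invoke \cite[Theorem B.1]{Khoshnevisan} rather than reprove it, and the whole lemma then reduces to the martingale identification of the first paragraph.
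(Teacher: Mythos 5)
Your proposal is correct and follows essentially the same route as the paper: the paper gives no independent proof of this lemma, simply recording it with the citation to \cite[Theorem B.1]{Khoshnevisan}, which is exactly the reduction you carry out (continuous-martingale identification of the Dalang--Walsh integral plus the sharp $(4p)^{p/2}$ BDG bound, whose linear-in-$p$ constant indeed comes from the refined continuous-martingale argument rather than the naive It\^o/Doob computation, as you correctly observe).
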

\medskip

Now let us recall some basic facts on Malliavin calculus associated with $W$. For any unexplained notation and result,   we refer to the book \cite{Nualart}.  We denote by $C_p^{\infty}(\R^n)$ the space of smooth functions with all their partial derivatives having at most polynomial growth at infinity. Let $\mathcal{S}$ be the space of simple functionals of the form 
$F = f(W(h_1), \dots, W(h_n))
$ for $f\in C_p^{\infty}(\RR^n)$ and $h_i \in \HH$, $1\leq i \leq n$. Then, the Malliavin derivative  $DF$ is the $\HH$-valued random variable given by
\begin{align*}
DF=\sum_{i=1}^n  \frac {\partial f} {\partial x_i} (W(h_1), \dots, W(h_n)) h_i\,.
\end{align*}
 The derivative operator $D$  is   closable   from $L^p(\Omega)$ into $L^p(\Omega;  \HH)$ for any $p \geq1$ and   we define $\mathbb{D}^{1,p}$ to be the completion of $\mathcal{S}$ under the norm
$$
\|F\|_{1,p} = \left(\E\big[ |F|^p \big] +   \E\big[  \|D F\|^p_\HH \big]   \right)^{1/p} \,.
$$
   The {\it chain rule} for $D$ asserts that if $F\in\mathbb{D}^{1,2}$ and $h:\R\to\R$ is Lipschitz, then $h(F)\in\mathbb{D}^{1,2}$  with 
\begin{equation} \label{cr}
D[h(F)] = Y DF,
\end{equation}
where  $Y$ is some  $\sigma\{ F\}$-measurable random variable  bounded by the Lipschitz constant of $h$; when $h$ is  additionally differentiable, we have $Y= h'(F)$.

We denote by $\delta$ the adjoint of  $D$ given by the duality formula 
\begin{equation} \label{D:delta}
\E[\delta(u) F] = \E[ \langle u, DF \rangle_\mathfrak{H}]
\end{equation}
for any $F \in \mathbb{D}^{1,2}$ and $u\in{\rm Dom} \, \delta \subset L^2(\Omega; \HH)$,  the domain of $\delta$. The operator $\delta$ is
also called the Skorohod integral and in the case of the Brownian motion, it coincides
with an extension of the It\^o integral introduced by Skorohod (see \emph{e.g.} \cite{GT, NuPa}).   

In our context, the Dalang-Walsh integral coincides with the Skorohod integral: Any   adapted random field $\Phi$ that satisfies $\E\big[ \|\Phi\|_\HH^2 \big]<\infty $ belongs to the domain of $\delta$  and
\[
\delta (\Phi) = 
\int_0^\infty \int_{\R^d} \Phi(s,y) W(d s, d y).
\]
As a consequence,  the mild formulation equation   \eqref{mild} can    be written as 
\begin{equation*}
u(t,x) = 1 +   \delta\big( G_{t-\bullet}(x-\ast) \sigma (  u(\bullet, \ast)  )\big).
\end{equation*}
      
      With the help of the derivative operator, we can represent $F\in\mathbb{D}^{1,2}$ as a stochastic integral. This is the content of the following two-parameter Clark-Ocone formula, see \emph{e.g.} \cite[Proposition 6.3]{CKNP19} for a proof.
 \begin{lemma}[Clark-Ocone formula] \label{CO}    Given $F\in\mathbb{D}^{1,2}$, we have almost surely
 \[
 F = \E[ F] + \int_{\R_+\times\R^d} \E\big[ D_{s,y} F \vert \mathscr{F}_s \big] W(ds,dy).
 \]
 \end{lemma}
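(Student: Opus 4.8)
The plan is to establish the identity first on the dense subspace of finite Wiener chaos expansions and then extend it to all of $\mathbb{D}^{1,2}$ by a closedness/isometry argument. Since the noise $W$ is white in time, the Hilbert space factorises as $\HH = L^2(\R_+;\HH_0)$, where $\HH_0$ is the spatial space obtained by completing $C_c(\R^d)$ under the $\gamma$-inner product; the spectral representation of $\langle\cdot,\cdot\rangle_\HH$ recorded in Section \ref{sub22} makes this explicit. Consequently the filtration $\{\mathscr{F}_s\}$ has the clean description that conditioning on $\mathscr{F}_s$ amounts to restricting the time variables of a chaos kernel to $[0,s]$. Recalling from \cite{Nualart} that every $F\in L^2(\Omega)$ admits the orthogonal chaos decomposition $F=\sum_{n\ge0}I_n(f_n)$ with symmetric kernels $f_n\in\HH^{\odot n}$, and that $F\in\mathbb{D}^{1,2}$ is characterised by $\sum_{n\ge1} n\,n!\,\|f_n\|_{\HH^{\otimes n}}^2<\infty$, I would carry out the proof on single chaoses and then sum.

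First I would prove the formula for $F=I_n(f_n)$, $n\ge1$ (the $n=0$ term is the constant $\E[F]$, which is subtracted off). Here the Malliavin derivative is the explicit multiple integral $D_{s,y}I_n(f_n)=n\,I_{n-1}(f_n(\cdot,s,y))$, the kernel having its last pair of variables frozen at $(s,y)$. Applying the conditional-expectation rule for multiple integrals, namely $\E[I_{n-1}(g)\mid\mathscr{F}_s]=I_{n-1}\big(g\,\1_{\{\text{all time arguments}\le s\}}\big)$, which follows from the temporal factorisation above, gives
\[
\E[D_{s,y}I_n(f_n)\mid\mathscr{F}_s]=n\,I_{n-1}\big(f_n(\cdot,s,y)\,\1_{\{\text{time arguments}\le s\}}\big).
\]
Because this integrand is $\mathbb{F}$-adapted, its Dalang-Walsh integral coincides with the Skorohod integral, and the recursive (iterated) representation of a multiple Wiener integral with respect to a temporally white noise yields
\[
n\int_{\R_+\times\R^d} I_{n-1}\big(f_n(\cdot,s,y)\,\1_{\{\text{time arguments}<s\}}\big)\,W(ds,dy)=I_n(f_n),
\]
which is precisely the asserted identity for $F=I_n(f_n)$ since $\E[I_n(f_n)]=0$.

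To pass to a general $F=\sum_{n\ge0}I_n(f_n)\in\mathbb{D}^{1,2}$, I would apply linearity to the partial sums $F^{(N)}=\sum_{n=0}^N I_n(f_n)$ and let $N\to\infty$. The left-hand sides converge in $L^2(\Omega)$ because $F^{(N)}\to F$. For the right-hand sides, the It\^o isometry for adapted integrands (equivalently, the quadratic-variation formula of Section \ref{sub22}) together with the contraction property of conditional expectation on $\HH$ gives
\[
\Big\|\int_{\R_+\times\R^d}\E[D_{s,y}(F-F^{(N)})\mid\mathscr{F}_s]\,W(ds,dy)\Big\|_2^2=\E\big[\|\E[D(F-F^{(N)})\mid\mathscr{F}_\bullet]\|_\HH^2\big]\le\E\big[\|D(F-F^{(N)})\|_\HH^2\big],
\]
which tends to $0$ by convergence in $\mathbb{D}^{1,2}$. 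Hence both sides of the Clark-Ocone identity are continuous in $F$ on $\mathbb{D}^{1,2}$ and agree on a dense set, so they coincide for all $F\in\mathbb{D}^{1,2}$.

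I expect the main obstacle to be the rigorous justification of the conditional-expectation formula for multiple integrals and of the iterated-integral decomposition, given that $\HH$ contains generalized functions in the space variable. The key observation, which I would verify through the Fourier representation of the $\HH$-inner product, is that this generalized-function subtlety lives entirely in the spatial coordinate, whereas the conditioning and the martingale structure act only on the genuinely $L^2$ temporal coordinate; the temporal whiteness is exactly what makes the restriction $\1_{\{\text{time}\le s\}}$ meaningful and the It\^o/Skorohod correspondence applicable.
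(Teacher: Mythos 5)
Your argument is sound, but be aware that the paper contains no proof of Lemma \ref{CO} at all: it simply cites \cite[Proposition 6.3]{CKNP19}, so the relevant comparison is with the standard arguments in that literature. Your route is the classical chaos-expansion proof (as in \cite{Nualart} for Brownian motion): differentiate $I_n(f_n)$, use that conditioning on $\mathscr{F}_s$ acts as the projection $\1_{[0,s]}$ in the time variable, and resum via the iterated-integral representation; the usual alternative in the SPDE setting is shorter and avoids chaos manipulations entirely, namely one invokes the It\^o--Walsh martingale representation theorem to write $F=\E[F]+\int \Psi\, dW$ and then identifies $\Psi(s,\cdot)=\E[D_{s,\cdot}F\mid\mathscr{F}_s]$ by testing against adapted fields through the duality relation \eqref{D:delta}. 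What your approach buys is self-containedness and transparency about exactly where temporal whiteness enters (the factorization $\HH=L^2(\R_+;\HH_0)$ makes the conditional-expectation rule $\E[I_n(g)\mid\mathscr{F}_s]=I_n\bigl(g\,\1_{[0,s]}^{\otimes n}\bigr)$ a one-line projection statement); what the duality route buys is that it never needs kernels at all, so the generalized-function nature of $\HH_0$ is invisible, whereas in your proof the formal evaluations $f_n(\cdot,s,y)$ and $D_{s,y}F$ must be systematically reread as $\HH_0$-valued (equivalently, via the Fourier/spectral form of the inner product), which you correctly flag. Two small points you should make explicit if writing this up: (i) the conditional-expectation rule for multiple integrals deserves a proof or a precise citation (it follows from $\E[I_n(g)\mid\sigma\{W(h):h\in\HH_s\}]=I_n(P_s^{\otimes n}g)$ with $P_s$ the orthogonal projection onto $\HH_s=L^2([0,s];\HH_0)$, provable on exponential vectors), and (ii) one needs a jointly measurable adapted version of $(s,\omega)\mapsto\E[D_sF\mid\mathscr{F}_s]$ for the Walsh integral to be defined; your closing isometry-plus-contraction limit argument is then exactly right.
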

 As a consequence of the above Clark-Ocone formula, we can derive
 the following  Poincar\'e inequality:
For any two random variables $F,G \in \mathbb{D}^{1,2}$, we have
\begin{equation} \label{Poincare}
|{\rm Cov}(F,G )| \le  \int_0^\infty \int_{\R^{2d}}   \|  D_{s,y}F\|_{2}\|  D_{s,z}G\|_{2}  \gamma(y-z) dy dz ds.
\end{equation}

Recall that the total variation distance between two  probability measures
$\mu$ and $\nu$ on $\R$ is defined by
\[
	d_{\rm TV}  (\mu ,\nu)= \sup_{B\in \mathcal{B}(\R)} | \mu(B)- \nu(B)|,
\]
where  $\mathcal{B}(\R)$ denotes the family of all Borel subsets on $\R$. As usual,
 $d_{\rm TV}(F\,,G)$ will  denote the total variation distance between the distribution measures of  $F$ and $G$; and  $d_{\rm TV}(F\,,\mathcal{N}(0\,,1))$ is
 is the total variation distance between $F$ and  a standard Gaussian random variable. 

The combination of Stein's method for normal approximations with Malliavin calculus leads to the following
bound on the total variation distance. See   \cite[Theorem 8.2.1]{Eulalia} for more details.

\begin{proposition}\label{pr:TV}
	Suppose that   $F=\delta(v) \in \mathbb{D}^{1,2}$  has unit variance   for some  $v\in{\rm dom}(\delta)$.   Then
	\begin{equation} \label{SM1}
		d_{\rm TV} (F\,,  \mathcal{ N}(0,1)) \le 2  
		\sqrt{ {\rm Var} \langle DF, v \rangle_{\HH}}.
	\end{equation}	
\end{proposition}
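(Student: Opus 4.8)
The plan is to run the by-now-standard Stein--Malliavin argument, taking full advantage of the fact that $F$ is presented \emph{a priori} as a Skorohod integral $F=\delta(v)$. First I would set up Stein's method for the total variation distance: for each Borel set $B$, put $h=\1_B$ and let $f_B$ be the solution of the Stein equation $f_B'(x)-x f_B(x)=\1_B(x)-\PP(N\in B)$, where $N\sim\mathcal{N}(0,1)$. I would recall the classical Stein estimates, namely that $f_B$ is Lipschitz with $\|f_B'\|_\infty\le 2$ (the bound $\|f_B\|_\infty\le\sqrt{\pi/2}$ is also available but not needed). Taking $x=F$, taking expectations, and then the supremum over all $B$ of the identity $\PP(F\in B)-\PP(N\in B)=\E[f_B'(F)-F f_B(F)]$ reduces the claim to controlling $\sup_B\bigl|\E[f_B'(F)-F f_B(F)]\bigr|$.

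Next I would integrate by parts. Since $F=\delta(v)$, the duality formula \eqref{D:delta} applied with $u=v$ and the functional $f_B(F)$ gives $\E[F f_B(F)]=\E[\delta(v)f_B(F)]=\E\bigl[\langle D f_B(F),v\rangle_\HH\bigr]$. By the chain rule \eqref{cr} for the Lipschitz function $f_B$ one has $D f_B(F)=f_B'(F)\,DF$, whence $\E[F f_B(F)]=\E\bigl[f_B'(F)\langle DF,v\rangle_\HH\bigr]$. Substituting back yields the exact identity $\E[f_B'(F)-F f_B(F)]=\E\bigl[f_B'(F)\bigl(1-\langle DF,v\rangle_\HH\bigr)\bigr]$, and the uniform bound $\|f_B'\|_\infty\le 2$ produces, uniformly in $B$, the estimate $\bigl|\PP(F\in B)-\PP(N\in B)\bigr|\le 2\,\E\bigl|1-\langle DF,v\rangle_\HH\bigr|$.

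To finish I would center the inner product and apply Cauchy--Schwarz. Using the duality formula \eqref{D:delta} once more gives $\E\bigl[\langle DF,v\rangle_\HH\bigr]=\E[F\,\delta(v)]=\E[F^2]=1$, where I use that $\E[F]=\E[\delta(v)]=0$ and that $F$ has unit variance. Thus $1-\langle DF,v\rangle_\HH$ is centered, so $\E\bigl|1-\langle DF,v\rangle_\HH\bigr|\le\bigl(\E\bigl[(1-\langle DF,v\rangle_\HH)^2\bigr]\bigr)^{1/2}=\sqrt{{\rm Var}\,\langle DF,v\rangle_\HH}$. Combining this with the bound of the previous paragraph gives $d_{\rm TV}(F,\mathcal{N}(0,1))\le 2\sqrt{{\rm Var}\,\langle DF,v\rangle_\HH}$, which is \eqref{SM1}.

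The one genuinely delicate point, and where I expect the main obstacle to lie, is the regularity of the Stein solution: for $h=\1_B$ the function $f_B$ is only Lipschitz and not $C^1$, so the chain-rule step $D f_B(F)=f_B'(F)DF$ must be justified rather than applied verbatim. I would handle this by first proving the bound for smooth, bounded test functions $h$ (for which $f_h\in C^1$ and \eqref{cr} applies directly with $Y=f_h'(F)$), and then passing to indicators by a routine approximation argument; alternatively, one invokes \eqref{cr} in its stated Lipschitz form, which guarantees $D[f_B(F)]=Y\,DF$ with $|Y|\le\mathrm{Lip}(f_B)\le 2$, and identifies $Y=f_B'(F)$ almost surely once the law of $F$ is known to be absolutely continuous. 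Either route is standard and is precisely the content of the cited reference \cite{Eulalia}; everything else in the argument is a direct application of the duality formula \eqref{D:delta} and the Cauchy--Schwarz inequality.
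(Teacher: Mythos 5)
Your proof is correct and follows essentially the same route as the paper, which does not prove Proposition \ref{pr:TV} itself but cites \cite[Theorem 8.2.1]{Eulalia}: Stein's equation with the bound $\|f_B'\|_\infty\le 2$, integration by parts via the duality formula \eqref{D:delta} applied to $F=\delta(v)$, the chain rule \eqref{cr}, and Cauchy--Schwarz after centering $\langle DF,v\rangle_\HH$ using $\E\big[\langle DF,v\rangle_\HH\big]=\E[F^2]=1$. You also correctly identify and resolve the only delicate point --- that $f_B$ is merely Lipschitz, so the identification $D[f_B(F)]=f_B'(F)\,DF$ requires either smoothing of the test functions or the Lipschitz form of \eqref{cr} --- exactly as in the cited reference.
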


\subsection{Basic formulas}
  We close this section with
 some basic relations for the fundamental solution $G_t(x)$.
 For $t>0$ and  $m>0$, we have 
 \begin{align*}  
 \int_{\R^{2d}}  G_t(y) G_t(z)\gamma(y-z) dydz &=   \int_{\R^d}  \frac{ \sin^2\big(  t | \xi| \big)}{|\xi|^2}  \mu(d\xi)  \\
&=t^2 \int_{| \xi | \leq m}  \frac{ \sin^2\big(  t |\xi | \big)}{t^2|\xi|^2}  \mu(d\xi) + \int_{| \xi | >m}  \frac{ \sin^2\big(  t |\xi | \big)}{ | \xi |^2}  \mu(d\xi) \\
&\leq t^2 \mu (  |\xi| \leq m) +  \int_{ | \xi | >m}  \frac{ \mu(d\xi)}{| \xi |^2}.
 \end{align*}
As a consequence,
   \begin{equation}  \label{FA1}
 \int_{\R^{2d}}  G_t(y) G_t(z)\gamma(y-z) dydz \leq \inf _{m>0}  \left( t^2 \mu ( |\xi| \leq m ) +  \int_{| \xi| >m} |\xi |^{-2} \mu(d\xi)  \right) =: \mathfrak{m}_t.
\end{equation}
It is clear that $\mathfrak{m}_t$ is nondecreasing in $t$.

For any $s<r<t$ and $d=1$, we have
\begin{equation} \label{G1}
 \int_{\R} G_{t-r}(x-z)  G_{r - s}(z-y)  dz \le \frac 12 (t-s) G_{t-s}(x-y).
 \end{equation}

For any $t\in(0,\infty)$, we define 
\begin{equation} \label{varphi}
\varphi_{t,R}(s,y) = \int_{B_R} G_{t-s}(x-y) dx
\end{equation}
where we recall thart $B_R= \{ x \in \R^d: |x| \le R\}$.
In the following lemma, we provide  a useful  estimate about $y\in\R^d\longmapsto\varphi_{t,R}(s,y)$.

  \begin{lemma}\label{lemfact} For $t_1,t_2\in(0,\infty)$,  the quantity 
    \[
  R^{-d}  \int_{\R^{2d}} \varphi_{t_1,R}(s,y) \varphi_{t_2,R}(s,z) \gamma(y-z)   dydz
  \]
  is uniformly bounded over $s\in[0, t_2\wedge t_1]$ and $R>0$, provided $\gamma\in L^1(\R^d)$.

 \end{lemma}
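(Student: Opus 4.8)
The plan is to exploit the elementary fact that the fundamental solution $G_\tau$ is integrable in both dimensions $d\in\{1,2\}$ with total mass exactly $\tau$, and then to perform the two spatial integrations in a convenient order. Writing $\tau_i=t_i-s\ge 0$, the first observation is a bound on $\varphi_{t_i,R}(s,\cdot)$ that is uniform in $y$, $R$ and $s$: since $G_\tau\ge 0$,
\[
\varphi_{t_i,R}(s,y)=\int_{B_R}G_{t_i-s}(x-y)\,dx\le\int_{\R^d}G_{t_i-s}(z)\,dz=t_i-s\le t_i,
\]
where the identity $\int_{\R^d}G_\tau(z)\,dz=\tau$ follows by direct computation. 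It is immediate for $d=1$, and for $d=2$ it reduces, after passing to polar coordinates, to $\int_0^\tau r(\tau^2-r^2)^{-1/2}\,dr=\tau$, the singular factor $(\tau^2-r^2)^{-1/2}$ being integrable.

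With this uniform bound in hand, I would first replace $\varphi_{t_2,R}(s,z)$ by the constant $t_2$ inside the double integral, which is legitimate because $\gamma\ge 0$ and $\varphi_{t_1,R}\ge 0$, obtaining
\[
\int_{\R^{2d}}\varphi_{t_1,R}(s,y)\,\varphi_{t_2,R}(s,z)\,\gamma(y-z)\,dy\,dz\le t_2\int_{\R^{2d}}\varphi_{t_1,R}(s,y)\,\gamma(y-z)\,dy\,dz.
\]
Next I would integrate in $z$ first, using $\int_{\R^d}\gamma(y-z)\,dz=\|\gamma\|_{L^1(\R^d)}$, which is finite by hypothesis and independent of $y$; this peels off the covariance entirely and leaves $t_2\,\|\gamma\|_{L^1(\R^d)}\int_{\R^d}\varphi_{t_1,R}(s,y)\,dy$.

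Finally, the remaining integral is computed by Fubini, all integrands being nonnegative: swapping the order of integration and again invoking $\int_{\R^d}G_\tau=\tau$ gives
\[
\int_{\R^d}\varphi_{t_1,R}(s,y)\,dy=\int_{B_R}\Big(\int_{\R^d}G_{t_1-s}(x-y)\,dy\Big)dx=(t_1-s)\,|B_R|=(t_1-s)\,\omega_d R^d.
\]
Assembling the pieces yields
\[
R^{-d}\int_{\R^{2d}}\varphi_{t_1,R}(s,y)\,\varphi_{t_2,R}(s,z)\,\gamma(y-z)\,dy\,dz\le t_1 t_2\,\omega_d\,\|\gamma\|_{L^1(\R^d)},
\]
a bound independent of both $R$ and $s\in[0,t_1\wedge t_2]$, which is precisely the claimed uniform boundedness. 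The argument is essentially routine; the only point demanding a little care is the uniform $L^\infty$ bound on $\varphi_{t_i,R}$ in the two-dimensional case, where one must verify that the singularity in $G_\tau$ is genuinely integrable so that its total mass is finite and equal to $\tau$.
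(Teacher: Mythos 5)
Your proposal is correct and follows essentially the same route as the paper: bound one factor $\varphi_{t_i,R}(s,\cdot)$ uniformly by $t_i-s$ using $\int_{\R^d}G_\tau = \tau$, integrate out $\gamma$ in one spatial variable via $\|\gamma\|_{L^1(\R^d)}$, and finish with $\int_{\R^d}\varphi_{t_1,R}(s,y)\,dy = (t_1-s)\,\omega_d R^d$. The only differences are cosmetic (the paper bounds the $t_1$-factor first and states the $\varphi$-integral as an inequality rather than an identity), and your explicit polar-coordinate verification of $\int_{\R^2}G_\tau=\tau$ is a welcome extra check.
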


  \begin{proof} It is clear that for any $d\in\{1,2\}$ and $s<t$,
  \begin{align}
  \varphi_{t,R}(s,y) &  \leq \int_{\R^d} G_{t-s}(x-y) dx = t-s,  \label{ineq:241} \\
  \int_{\R^d}\varphi_{t,R}(s,y) dy &= \int_{B_R} dx \int_{\R^d}dy G_{t-s}(x-y) \leq \omega_d (t-s) R^d. \label{ineq:242}
  \end{align}
Then, we can write 
   \begin{align*} 
    R^{-d}  \int_{\R^{2d}} \varphi_{t_1,R}(s,y) \varphi_{t_2,R}(s,z) \gamma(y-z)   dydz & \leq (t_1-s)   R^{-d}  \int_{\R^{2d}}  \varphi_{t_2,R}(s,z) \gamma(y-z)   dydz  \quad\text{by \eqref{ineq:241}} \\
    &\leq (t_1-s) \big\| \gamma\big\|_{L^1(\R^d)} R^{-d} \int_{\R^d}  \varphi_{t_2,R}(s,z) dz \\
    &\leq \omega_d (t_1-s)(t_2-s)  \big\| \gamma\big\|_{L^1(\R^d)} .
 \end{align*} 
  This gives us the desired uniform boundedness. \qedhere

     \end{proof}

In the end of this section, we state two useful estimates from the paper \cite{BNZ20}.

\begin{lemma}\label{LEM51} When $d=2$, $G_{t}(x) =\frac{1}{2\pi} \big[ t^2 - |x|^2 \big]^{-1/2}     \mathbf{1}_{\{ |x|<t \}} $.  The following estimates hold:

{\rm (1)}  For $q\in(1/2,1)$ and $s<t$, then
\[
 \int_s^t dr  \big( G_{t-r}^{2q} \ast G_{r-s}^{2q} \big)^{1/q}(z) \lesssim (t-s)^{\frac{1}{q}-1} G_{t-s}^{2-\frac{1}{q}}(z),
\]
where the implicit constant only depends on $q$; see Lemma 3.3 in \cite{BNZ20}.

\smallskip

{\rm (2)} For any $p\in(0,1)$ and $q\in(1/2,1)$ such that $p + 2q \leq 3$, we have for $s<t$,
\[
\int_s^t G_{t-r}^{2q}\ast G_{r-s}^p(z) dr \lesssim (t-s)^{3-p-2q} \mathbf{1}_{\{ |z|< t-s  \}},
\]
where the implicit constant only depends on $p$ and $q$; see Lemma 3.4 in \cite{BNZ20}.

\end{lemma}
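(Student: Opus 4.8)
The plan is to exploit the exact self-similarity of the two-dimensional wave kernel to strip away the time integration, and then to analyze the resulting spatial convolution of two powers of $G$ by hand. First I would record the scaling identity $G_t(x)=t^{-1}G_1(x/t)$, which is immediate from \eqref{fundamentalS}. Substituting $t-r=(t-s)\alpha$ in the $r$-integral and rescaling the space variable by $t-s$, one checks that the two sides of each estimate are homogeneous of the same degree in $t-s$ — degree $2/q-3$ in (1) and degree $3-p-2q$ in (2) — so it suffices to treat $t-s=1$, that is, to prove
\[
\int_0^1 \big(G_\alpha^{2q}\ast G_{1-\alpha}^{2q}\big)^{1/q}(\zeta)\, d\alpha \lesssim \big(1-|\zeta|^2\big)^{-\left(1-\frac{1}{2q}\right)}\mathbf{1}_{\{|\zeta|<1\}},
\]
\[
\int_0^1 \big(G_\alpha^{2q}\ast G_{1-\alpha}^{p}\big)(\zeta)\, d\alpha \lesssim \mathbf{1}_{\{|\zeta|<1\}}.
\]

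For the spatial convolutions I would first linearize the boundary singularity: on $\{|x|<a\}$ one has $a^2-|x|^2=(a-|x|)(a+|x|)\asymp a\,(a-|x|)$, whence $G_a^{2q}(x)\asymp a^{-q}(a-|x|)_+^{-q}$ and $G_b^{p}(x)\asymp b^{-p/2}(b-|x|)_+^{-p/2}$, where $(\,\cdot\,)_+^{-\theta}$ denotes the indicated power taken where the base is positive and $0$ otherwise. Up to the harmless prefactor $\alpha^{-q}(1-\alpha)^{-q}$, the convolution $(G_\alpha^{2q}\ast G_{1-\alpha}^{2q})(\zeta)$ is then comparable to $\int(\alpha-|\zeta-w|)_+^{-q}(1-\alpha-|w|)_+^{-q}\,dw$, an integral over the lens where the two disks of radii $\alpha$ and $1-\alpha$ overlap. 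Setting $D=1-|\zeta|$ (the distance to the light cone $|\zeta|=1$) and introducing normal–tangential coordinates $(\rho,\tau)$ centered at the near-tangency point of the two boundary circles, the two factors become $\asymp(\rho-c\tau^2)_+^{-q}$ and $\asymp(D-\rho-c'\tau^2)_+^{-q}$; the $\rho$-integral is then an explicit Beta integral contributing $\asymp L(\tau)^{1-2q}$ with $L(\tau)=D-O(\tau^2)$, and the remaining $\tau$-integral (convergent because $q<1$) yields a pointwise bound of the schematic form $\alpha^{1/2-q}D^{3/2-2q}$ in the regime where both radii are comparable. This has to be supplemented by the degenerate regimes $\alpha\to0$ and $\alpha\to1$, in which one disk collapses and the convolution is instead governed by $\|G_\alpha^{2q}\|_{L^1}\asymp\alpha^{2-2q}$ times the (locally smooth) value of the other kernel.

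Finally I would integrate these pointwise bounds in $\alpha$. For (2) the two exponents are $q$ and $p/2$, and the same lens computation produces a light-cone factor $D^{3/2-q-p/2}$; its exponent is nonnegative precisely when $p+2q\le3$, so that after the (convergent) $\alpha$-integration the left-hand side stays bounded on $\{|\zeta|<1\}$ — this is the one place where the hypothesis $p+2q\le3$ enters in an essential way. For (1), raising the convolution to the power $1/q$ and integrating the various $\alpha$-regimes (the relevant powers of $\alpha$ all having exponent $>-1$, hence integrable) delivers the asserted control by $(1-|\zeta|^2)^{-(1-\frac1{2q})}$. The hard part will be the second step, the pointwise convolution bound: the convolution is genuinely non-uniform in $\alpha$, blowing up as either radius tends to $0$, so the estimate must be split into the \emph{both radii comparable} and \emph{one radius small} regimes, and within the lens the two integrable boundary singularities have to be disentangled simultaneously. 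Controlling their joint contribution near the near-tangency point, and then checking that the $\alpha$-integrals of the resulting powers converge to the stated kernels, is where essentially all of the effort resides.
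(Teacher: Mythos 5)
The first thing to note is that the paper contains no proof of this lemma at all: both parts are imported verbatim from Lemmas 3.3 and 3.4 of \cite{BNZ20}, so there is no internal argument to compare yours against, and your proposal must be judged as a free-standing proof. As such, its architecture is sound and the bookkeeping checks out. The scaling $G_t(x)=t^{-1}G_1(x/t)$ does give both sides homogeneity degree $2/q-3$ in (1) and $3-p-2q$ in (2), so the reduction to $t-s=1$ is legitimate; the comparison $G_a^{2q}(x)\asymp a^{-q}(a-|x|)_+^{-q}$ is correct since $a+|x|\in[a,2a]$ on the support; and in the near-tangency regime $D:=1-|\zeta|\ll\min(\alpha,1-\alpha)$ your Beta-integral computation yields $(G_\alpha^{2q}\ast G_{1-\alpha}^{2q})(\zeta)\lesssim(\alpha(1-\alpha))^{1/2-q}D^{3/2-2q}$ with constants depending only on $q$ (the curvature factor $\kappa^{-1/2}\asymp(\alpha(1-\alpha))^{1/2}$ is exactly what makes the subsequent $\alpha$-integral converge, since $(1/2-q)/q>-1$). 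After the $1/q$-th power this regime contributes $D^{3/(2q)-2}\le D^{1/(2q)-1}$, even stronger than required, which is harmless for an upper bound; and you correctly identify that $p+2q\le 3$ enters only to make the light-cone exponent $3/2-q-p/2$ in (2) nonnegative.

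There is, however, one regime your split mis-describes, and the step as written would fail there: the internal-tangency crossover $\alpha\approx D/2$ (and its mirror $1-\alpha\approx D/2$), where the small disk $B(\zeta,\alpha)$ sits inside $\{|w|<1-\alpha\}$ and touches its boundary. There the ``locally smooth value of the other kernel'' does not exist: with $g=D-2\alpha$, the bound $\|G_\alpha^{2q}\|_{L^1}\cdot\sup(\cdot)\lesssim\alpha^{2-2q}g^{-q}$ gives, after raising to the power $1/q$, a factor $g^{-1}$ which is not integrable in $\alpha$ through $g=0$; worse, for $q\ge 3/4$ the convolution genuinely diverges at the isolated value $\alpha=D/2$, since near the touching point the integrand is $\asymp\rho^{-q}(\rho+g+\tau^2/\alpha)^{-q}$ and the tangential integral $\int(h+v^2)^{1-2q}\,dv$ blows up as $h=g/\alpha\to 0$ when $1-2q<-1/2$. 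The repair uses your own lens technology adapted to internal tangency: one obtains the refined pointwise bound $\lesssim\alpha^{1/2-q}g^{3/2-2q}$ for $q>3/4$ (a logarithm at $q=3/4$, and $\lesssim\alpha^{2-3q}$ for $q<3/4$), whose $1/q$-th power is $\alpha$-integrable because $(3/2-2q)/q>-1$, contributing $O(D^{2/q-2})\le O(D^{1/(2q)-1})$ in (1); the analogous check in (2) degrades only by a logarithm at the endpoint $p+2q=3$ and still yields a bounded contribution. With this crossover band $\alpha\asymp D$ treated explicitly (your ``disk well inside'' estimate is valid only for, say, $\alpha\le D/4$), your proof goes through.
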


\section{Proof of Proposition \ref{prop1}} \label{SEC3}

The strict stationarity  follows from  two facts:
\begin{enumerate}

\item
For each $y\in \R^d$,  the random field $\{ u(t,x+y) : x\in \R^d\}$  coincides almost surely with  the random field $u$ driven by the shifted  noise $W_y$ given by
\[
W_y(\phi)= \int_{\R_+} \int_{\R^d} \phi(s,x-y) W(ds,dx), ~\phi\in\mathfrak{H}.
\]
\item The noise $W_y$ has the same distribution as $W$, which allows to conclude the proof of the stationarity property.
\end{enumerate}
We refer readers to  Lemma 7.1 in \cite{CKNP19} and footnote 1 in \cite{DNZ18} for similar arguments.  Now let us prove the ergodicity and in view of \cite[Lemma 7.2]{CKNP19}, it suffices to prove
\[
V_R(t) := \text{Var} \left( R^{-d} \int_{B_R} \prod_{j=1}^k g_j \big(  u(t,x+ \zeta^j) \big)    dx\right) \xrightarrow{R\to\infty} 0,
\]
for any fixed $\zeta^1, \dots,\zeta^k \in \R^d$ and $g_1, \dots, g_k\in C_b(\R)$ such that each $g_j$ vanishes at zero and has Lipschitz constant bounded by $1$.

 Using the Poincar\'e inequality  \eqref{Poincare}, we obtain
\begin{align}  \nonumber
V_R(t) & \le  R^{-2d} \int_{B_R^2} dx dy   \big\vert  \text{Cov} ( \mathcal{R}(x),  \mathcal{R}(y)) \big\vert   \hskip 15pt  \text{with}~ \mathcal{R}(x):= \prod_{j=1}^k g_j \big(  u(t,x+ \zeta^j) \big)  \\  \label{EQ2}
& \le R^{-2d} \int_{B_R^2} \int_0^t  \int_{\R^{2d}}    \| D_{s,z} \mathcal{R}(x) \| _2\| D_{s,z'} \mathcal{R}(y) \| _2 \,\gamma(z-z')dz dz' ds  dxdy .
\end{align} 
By the chain rule \eqref{cr},
\[
\big\vert D_{s,z} \mathcal{R}(x) \big\vert \leq \mathbf{1}_{(0,t)}(s)  \sum_{j_0=1}^k \left\vert \prod_{j=1, j\not = j_0} ^k g_j(u(t,x+ \zeta^j)) \right\vert   \big\vert D_{s,y} u(t,x+ \zeta^{j_0}) \big\vert,
\]
which implies, for any $s\in [0,t]$, 
\begin{equation} \label{EQ1}
\| D_{s,z} \mathcal{R}(x) \|_2   \le  \max_{1\le j \le k} \sup_{a\in \R} | g_j(a) | ^{k-1} \sum_{j_0=1}^k        \big\|  D_{s,y} u(t,x+ \zeta^{j_0}) \big\|_2  \lesssim  \sum_{j_0=1}^k  G_{t-s}(x-y +\zeta^{j_0}),
\end{equation}
 where in the second inequality we used Theorem \ref{THM}. Plugging  \eqref{EQ1} into \eqref{EQ2}, yields
\begin{align*}  
V_R(t)  \lesssim  R^{-2d}\sum_{j,\ell =1}^k   \int_{B_R^2} \int_0^t  \int_{\R^{2d}}    G_{t-s}(x-z +\zeta^{j})G_{t-s}(y-z' +\zeta^{\ell})\gamma(z-z')dz dz' ds  dxdy.
\end{align*} 
Using
\[
 \int_{B_R} dy  G_{t-s}(y-z' +\zeta^{\ell}) \leq  \int_{\R^d} dy  G_{t-s}(y) \lesssim t-s \quad\text{and}\quad \int_{\R^d} dz' \gamma(z-z') <\infty,
 \]
we deduce that  $V_R(t)  \lesssim  R^{-d}$. This finish the proof of Proposition \ref{prop1}. \hfill $\square$\\

\section{Proof of Theorem \ref{MAIN}} \label{SEC:Proof11}
The proof will be decomposed in several steps.
    
 \subsection{Asymptotic behavior of the covariance} \label{sub41}
 For any $t_1,t_2 \ge 0$,  in view of the stationarity of the random field $\{ (u(t_1,x),  u(t_2,x)), x\in \R^d\}$, we have
 \[
  \E\big[ F_R(t_1) F_R(t_2) \big]  =  \int_{B_R^2} \E [ (u(t,x-y)-1)(u(t,0)-1)] dx dy.
  \]
  By the dominated convergence theorem,  we obtain
  \begin{equation}  \label{R1}
  \lim_{R\rightarrow \infty}   R^{-d} \E\big[ F_R(t_1) F_R(t_2) \big]  = \omega_d \int_{\R^d} {\rm Cov} (u(t_1,x), u(t_2,0)) dx,
  \end{equation}
  provided $x\in\R^d \longmapsto \big\vert  {\rm Cov} (u(t_1,x), u(t_2,0)) \big\vert $ is integrable. In the next lemma we show this integrability property.
  \begin{lemma}  For any $t_1,t_2 \ge 0$,
  \[
    \int_{\R^d} \big\vert  {\rm Cov} (u(t_1,x), u(t_2,0)) \big\vert dx <\infty.
    \]
    \end{lemma}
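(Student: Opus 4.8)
The plan is to combine the Poincar\'e inequality \eqref{Poincare} with the pointwise Malliavin-derivative bound of Theorem \ref{THM}, and then to exploit the hypothesis $\gamma\in L^1(\R^d)$ in order to carry out the spatial integration. The key observation is that all the analytic difficulty has already been absorbed into Theorem \ref{THM}, so what remains is essentially an exercise in organizing the order of integration.

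First I would apply \eqref{Poincare} with $F=u(t_1,x)$ and $G=u(t_2,0)$. Since the solution is adapted and causal, $D_{s,y}u(t_i,\cdot)$ vanishes for $s>t_i$, so the time integral is effectively over $[0,t_1\wedge t_2]$. Invoking the upper bound in \eqref{IMP} with $p=2$, and using that $G$ is even so that $G_{t_2-s}(-z)=G_{t_2-s}(z)$, I obtain
\[
\big| {\rm Cov}(u(t_1,x),u(t_2,0)) \big| \lesssim \int_0^{t_1\wedge t_2} \int_{\R^{2d}} G_{t_1-s}(x-y)\, G_{t_2-s}(z)\, \gamma(y-z)\, dy\, dz\, ds.
\]

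Next I integrate in $x\in\R^d$. As the integrand is nonnegative, Tonelli's theorem justifies interchanging the order of integration; integrating the factor $G_{t_1-s}(x-y)$ in $x$ first decouples it from $\gamma$, since $\int_{\R^d} G_{t_1-s}(x-y)\, dx = t_1-s$ by \eqref{ineq:241}, which is independent of $y$. Then $\int_{\R^d}\gamma(y-z)\, dy=\|\gamma\|_{L^1(\R^d)}$ and $\int_{\R^d} G_{t_2-s}(z)\, dz = t_2-s$ give
\[
\int_{\R^d} \big| {\rm Cov}(u(t_1,x),u(t_2,0)) \big|\, dx \lesssim \|\gamma\|_{L^1(\R^d)} \int_0^{t_1\wedge t_2} (t_1-s)(t_2-s)\, ds.
\]
The right-hand side is the integral of a bounded function over a bounded interval, hence finite, which is the asserted integrability.

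The computation is short precisely because the heavy lifting is done by Theorem \ref{THM}; the only genuine inputs here are the Gaussian-type bound on the Malliavin derivative, the elementary mass identity $\int_{\R^d} G_t = t$, and the integrability $\gamma\in L^1(\R^d)$. The main point to watch is the order of integration (integrating the $G_{t_1-s}$ factor in $x$ first, so that $\gamma$ is confronted with a Lebesgue integral rather than with another kernel), together with the causal support restriction of the time integral to $[0,t_1\wedge t_2]$. In particular, no singularity of the $2$D wave kernel in \eqref{fundamentalS} ever enters, since each copy of $G$ is integrated against Lebesgue measure and never against itself.
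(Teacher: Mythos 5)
Your proof is correct and takes essentially the same route as the paper's: Poincar\'e inequality \eqref{Poincare} combined with the upper bound in \eqref{IMP} for $p=2$, followed by successive integration in $x$, $y$, $z$ using $\gamma\in L^1(\R^d)$. The only difference is that you spell out the Tonelli steps and the mass identity $\int_{\R^d}G_t(x)\,dx=t$ explicitly, which the paper leaves as ``integrating successively in the variables $x$, $y$, $z$.''
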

    \begin{proof}
    Fix $t_1,t_2 \in [0,T]$. Using  Poincar\'e inequality \eqref{Poincare} and the estimate \eqref{IMP} yields
    \begin{align*}
   & \int_{\R^d}   dx |{\rm Cov} (u(t_1,x), u(t_2,0)) |  \\
   & \le     \int_{\R^d}dx \int_0^ { t_1\wedge t_2}  \int_{ \R^{2d}} dydzds  \| D_{s,y} u(t_1,x) \|_2  \| D_{s,z} u(t_2,0) \|_2
    \gamma(y-z)  dydzds  \\
   & \le       (C_{2,t,L,\gamma} \kappa_{2,t,L})^2   \int_{\R^d}dx \int_0^ { t_1\wedge t_2}  \int_{ \R^{2d}} ds  G_{t_1-s}(x-y) G_{t_2-s}(z)  \gamma(y-z) dydz,
    \end{align*} 
    which is   finite, by integrating successively in the variables $x$, $y$, $z$ and using the integrability of $\gamma$.
    This completes the proof.
    \end{proof}

  \subsection{Convergence of the finite-dimensional distributions}\label{sec42}
From the mild equation \eqref{mild} satisfied by $u(t,x)$ and using the fact that the Dalang-Walsh integral coincides with the divergence operator, we can write
 \begin{equation} \label{EQ4}
 F_R(t) = \int_{B_R} (u(t,x) -1)dx = \delta(V_{t,R}),
 \end{equation}
 with
 \begin{equation}
V_{t,R}(s,y) = \varphi_{t,R}(s,y) \sigma(u(s,y)),
\end{equation}
where $\varphi_{t,R}(s,y) $ has been defined in \eqref{varphi}.

The next proposition is the basic ingredient for the convergence  of the finite-dimensional distributions and also for the total variation bound  in \eqref{QCLT}.

\begin{proposition}\label{prop42}
For any $t_1, t_2\in [0,T]$,
 \begin{align}\label{finalbdd}
         {\rm Var}\big(   \langle DF_R(t_1), V_{t_2,R}  \rangle_\HH \big) \lesssim R^{d} ~\text{for $R\geq t_1+t_2$}.             
 \end{align}
\end{proposition}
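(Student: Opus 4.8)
The plan is to express the variance as a covariance and apply the Poincar\'e inequality \eqref{Poincare} a second time, reducing the whole estimate to deterministic space--time integrals of products of fundamental solutions against $\gamma$. First I would write the inner product explicitly: since $D_{s,y}F_R(t_1)=\int_{B_R}D_{s,y}u(t_1,x)\,dx$ and $V_{t_2,R}(s,z)=\varphi_{t_2,R}(s,z)\sigma(u(s,z))$, the definition of $\langle\cdot,\cdot\rangle_\HH$ gives
\begin{equation*}
G:=\langle DF_R(t_1),V_{t_2,R}\rangle_\HH=\int_0^{t_1\wedge t_2}\!\!\int_{\R^{2d}}\Big(\int_{B_R}D_{s,y}u(t_1,x)\,dx\Big)\varphi_{t_2,R}(s,z)\sigma(u(s,z))\gamma(y-z)\,dy\,dz\,ds,
\end{equation*}
the time integral being cut at $t_1\wedge t_2$ because the two factors vanish for $s>t_1$ and $s>t_2$, respectively. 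After verifying that $G\in\mathbb{D}^{1,2}$, I apply \eqref{Poincare} to $\mathrm{Var}(G)=\mathrm{Cov}(G,G)$ to obtain
\begin{equation*}
\mathrm{Var}(G)\le\int_0^\infty\!\!\int_{\R^{2d}}\big\|D_{r,\theta}G\big\|_2\,\big\|D_{r,\theta'}G\big\|_2\,\gamma(\theta-\theta')\,d\theta\,d\theta'\,dr.
\end{equation*}

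Next I would compute $D_{r,\theta}G$ by differentiating under the integral and using the chain and product rules \eqref{cr}. This yields two families of terms: one in which the derivative hits $D_{s,y}u(t_1,x)$, producing the \emph{second} Malliavin derivative $D_{r,\theta}D_{s,y}u(t_1,x)$, and one in which it hits $\sigma(u(s,z))$, producing $\sigma'(u(s,z))D_{r,\theta}u(s,z)$ with $|\sigma'|\le L$. For the second family Theorem \ref{THM} applies directly, giving $\|D_{r,\theta}u(s,z)\|_p\lesssim G_{s-r}(z-\theta)$ together with uniformly bounded moments of $\sigma(u(s,z))$. For the first family I need the second-order analogue of \eqref{IMP}, an estimate which for $s<r<t_1$ takes the schematic form $\|D_{r,\theta}D_{s,y}u(t_1,x)\|_p\lesssim G_{t_1-r}(x-\theta)\,G_{r-s}(\theta-y)$ (a product of two fundamental solutions chaining through the intermediate point $(r,\theta)$), with the symmetric bound when $r<s$; I would establish it by the same Picard-iteration and BDG scheme (Lemma \ref{BDG}) that underlies Theorem \ref{THM}. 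Taking $L^2$ norms, splitting the products by H\"older's inequality and pulling the norm inside by Minkowski's inequality, I bound $\|D_{r,\theta}G\|_2$ by $\int_{B_R}$-averages of products of the kernels $G$.

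It then remains to control multiple space--time convolutions of fundamental solutions against $\gamma$. Here I would use systematically the two bounds $\varphi_{t,R}(s,\cdot)\le t-s$ and $\int_{\R^d}\varphi_{t,R}(s,z)\,dz\le\omega_d(t-s)R^d$ from \eqref{ineq:241}--\eqref{ineq:242}, the integrability of $\gamma$ (so that each integration of a kernel against $\gamma$ contributes a finite constant), the total-mass identity $\int_{\R^d}G_{t-s}(x)\,dx=t-s$, and, when $d=2$, Lemma \ref{LEM51} to tame convolutions of the singular kernel \eqref{fundamentalS}. The decisive bookkeeping is that after integrating out all but one spatial variable against the $\gamma$'s and the finite mass of the kernels, exactly one factor $\varphi_{t,R}$ remains to be integrated over space, and \eqref{ineq:242} turns it into the single power $R^d$; the surviving time integrals contribute only a finite constant depending on $t_1,t_2,L,\gamma$. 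Working in the regime $R\ge t_1+t_2$ keeps the relevant light cones comparable to $B_R$, so the implicit constants are uniform in $R$.

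The main obstacle is twofold. First, establishing and correctly deploying the second-derivative estimate: unlike \eqref{IMP} it couples two space--time points, the intermediate integration must be tracked precisely, and in $d=2$ the product of two singular kernels forces one to rely on Lemma \ref{LEM51}. Second, the combinatorial bookkeeping of the resulting high-dimensional spatial integrals: the delicate point is to check that the bound scales like $R^d$ and not a higher power, i.e.\ that only one free ball-integration survives once all kernel-- and $\gamma$--integrations have been performed. It is precisely the integrability of $\gamma$ together with the uniform bounds \eqref{ineq:241}--\eqref{ineq:242} on $\varphi_{t,R}$ that make this possible.
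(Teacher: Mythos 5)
Your plan founders on a genuine obstacle at the very first step after the Poincar\'e inequality: to apply \eqref{Poincare} to $G=\langle DF_R(t_1),V_{t_2,R}\rangle_\HH$ you must have $G\in\mathbb{D}^{1,2}$, and your computation of $D_{r,\theta}G$ requires the second Malliavin derivative $D_{r,\theta}D_{s,y}u(t_1,x)$. Under the paper's standing assumption \textbf{(C1)} the coefficient $\sigma$ is only Lipschitz, so the solution is known to lie in $\mathbb{D}^{1,p}$ but not in $\mathbb{D}^{2,p}$: when you differentiate the equation satisfied by $D_{s,y}u$, the chain rule \eqref{cr} replaces $\sigma'(u(r,z))$ by a bounded but merely $\sigma\{u(r,z)\}$-measurable random variable $\Sigma_{r,z}$, and differentiating $\Sigma_{r,z}$ a second time would require a second derivative of $\sigma$ that does not exist. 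So the second-order analogue of \eqref{IMP} that your argument hinges on cannot even be formulated in this setting, let alone proved by the Picard--BDG scheme. (If one strengthened the hypothesis to $\sigma\in C^2_b$, your schematic bound $\|D_{r,\theta}D_{s,y}u(t_1,x)\|_p\lesssim G_{t_1-r}(x-\theta)G_{r-s}(\theta-y)$ is plausible and the $R^d$ bookkeeping you describe would likely go through, but that proves a weaker statement than the proposition.)

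The paper avoids second derivatives entirely by a different decomposition. Writing out $D_{s,y}F_R(t_1)$ from the mild equation gives $\langle DF_R(t_1),V_{t_2,R}\rangle_\HH=A_1+A_2$, where $A_1=\langle V_{t_1,R},V_{t_2,R}\rangle_\HH$ and $A_2$ carries the inner stochastic integral $\int_s^{t_1}\int\varphi_{t_1,R}(r,z)\Sigma_{r,z}D_{s,y}u(r,z)\,W(dr,dz)$. Then ${\rm Var}(A_1)$ is bounded by the Poincar\'e inequality applied to $A_1$ alone --- whose derivative, via \eqref{cr} applied to $\sigma(u(s,y))\sigma(u(s,z))$, involves only \emph{first} derivatives of $u$, controlled by \eqref{IMP} --- while ${\rm Var}(A_2)\le\E[A_2^2]$ is handled through the It\^o isometry/BDG bound for the martingale part, again using only \eqref{IMP}. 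This is what produces the deterministic quantities $\mathcal{T}_s$ and $\mathcal{U}_s$ in \eqref{TU:suf}, which are then bounded by $R^d$ exactly in the elementary way you anticipate (via \eqref{ineq:241}, \eqref{ineq:242}, integrability of $\gamma$ and Lemma \ref{lemfact}); note that at this stage not even Lemma \ref{LEM51} is needed for $d=2$, since the singular-kernel estimates were already absorbed into the proof of Theorem \ref{THM}. The moral: the martingale structure of the derivative process substitutes for a second application of the Poincar\'e inequality, which is precisely what keeps the argument within first-order Malliavin calculus.
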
 
 
 Together with Proposition \ref{pr:TV}, the above estimate \eqref{finalbdd} leads us to the total variation bound in \eqref{QCLT}.
 
 \begin{proof}[Proof of Proposition  \ref{prop42}]
Note that
\[
D_{s,y}F_R(t) =   \varphi_{t,R}(s,y) \sigma(u(s,y)) +  \int_s^t\int_{\R^d} \varphi_{t,R}(r,z) \Sigma_{r,z} D_{s,y} u(r,z) W(dr, dz),
\]
where $\{ \Sigma_{r,z}: (r,z)\in\R_+\times\R^d \}$ is an adapted random field that is uniformly bounded by $L$, the Lipschitz constant of $\sigma$; see condition \textbf{(C1)}.
Then, for $t_1, t_2\in [0,T]$, we can write 
$\big\langle DF_R(t_1), V_{t_2, R} \big\rangle_\HH = A_1 + A_2$, with 
\begin{align*}
A_1 &= \big\langle  V_{t_1, R}, V_{t_2, R} \big\rangle_\HH =\int_0^{t_1\wedge t_2} ds  \int_{\R^{2d}} dy dz     \gamma(y-z) \varphi_{t_1, R}(s,y) \varphi_{t_2, R}(s,z) \sigma\big( u(s,y)\big) \sigma\big( u(s,z)\big)  
\end{align*}
and
\begin{align*}
A_2 &=\int_0 ^{t_1\wedge t_2} ds  \int_{\R^{2d}}  dydy'\gamma(y-y')  V_{t_2,R}(s,y') \left( \int_s^{t_1}\int_{\R^2} \varphi_{t_1, R}(r,z) \Sigma_{r,z}  D_{s,y}u(r,z)  W(dr,dz)\right). 
\end{align*}
It is clear that $\text{Var}\big(  \langle DF_R(t_1), V_{t_2, R} \rangle_\HH \big) \lesssim \text{Var}( A_1)  +  \text{Var}( A_2)$.  So in the sequel, we need to prove
\begin{center}
$\text{Var}( A_j)  \lesssim  R^d$   for $j=1,2$.
 \end{center}
Following the same strategy as in \cite[Section 4.2]{BNZ20}, we only need to prove 
\begin{align}\label{TU:suf}
\sup_{s\leq t_1\wedge t_2} \big(    \mathcal{T}_s +  \mathcal{U}_s\big) \lesssim  R^d,
\end{align}
where for
      $s\in(0, t_1\wedge t_2]$,
 \begin{align*} 
 \mathcal{T}_s &=   \int_0^s dr    \int_{\R^{6d}}  \varphi_{t_1,R}(s,y)     \varphi_{t_1,R}(s,y')    \varphi_{t_2,R}(s,z)   \varphi_{t_2,R}(s,z')   G_{s-r}(z-\xi)    G_{s-r}(z'-\xi')        \\
& \qquad\qquad\qquad  \times     \gamma( \xi   - \xi')    \gamma( y-z) \gamma( y'-z')   d\xi   d\xi'  dydz dy'dz' dx_1dx_1'dx_2dx_2'
 \end{align*}
  and
    \begin{align*} 
\mathcal{U}_s&=   \int_s^{t_1} dr  \int_{\R^{6d}} dzd\wt{z} dydy'd\wt{y} d\wt{y'}  \gamma( y -y') \gamma( \wt{y} -\wt{y'})   \gamma( z - \wt{z} )\\
&\quad\times \varphi_{t_2, R}(s, y')\varphi_{t_2, R}(s, \wt{y'}) \varphi_{t_1, R}(r,z)\varphi_{t_1, R}(r,\wt{z})  G_{r-s}(y-z)G_{r-s}(\wt{y}-\wt{z}).
\end{align*}
  In what follows, we only prove $\sup\big\{   \mathcal{T}_s: s\leq t_1\wedge t_2 \big\} \lesssim R^d$   and we omit the other part because $\mathcal{U}_s$ has the same-type expression as $\mathcal{T}_s$.

Using  \eqref{ineq:241},   we can  write
 \begin{align*} 
 \mathcal{T}_s &\leq (t_1-s) (t_2-s)   \int_0^s dr  \int_{\R^{6d}}  \varphi_{t_1,R}(s,y)   \varphi_{t_2,R}(s,z)  \gamma( y-z)    G_{s-r}(z-\xi)         \\
& \qquad  \times   G_{s-r}(z'-\xi')     \gamma( \xi   - \xi')  \gamma( y'-z')   d\xi   d\xi'  dydz dy'dz',
 \end{align*}
then we perform integration with respect to $dy', dz', d\xi', d\xi$ inductively to write 
 \begin{align*} 
 \mathcal{T}_s &\lesssim     \int_0^s dr  \int_{\R^{2d}}  \varphi_{t_1,R}(s,y)   \varphi_{t_2,R}(s,z)  \gamma( y-z)      dydz  \lesssim R^d,
 \end{align*}
where the last estimate follows from Lemma \ref{lemfact}. This leads to the bound \eqref{finalbdd}. \qedhere

 \end{proof}

 We are ready   to show the convergence of the finite-dimensional distributions.
 Let us choose  $m\ge 1$ points $t_1,\ldots,t_m\in(0\,,\infty)$.
	 Consider the  random vector $\Phi_R= \big(F_R(t_1), \dots, F_R(t_m) \big)$  and	  let $\mathbf{G}=(\mathcal{G}_1 \,,\ldots,\mathcal{G}_m)$ denote a centered Gaussian random vector
	with covariance matrix $(\mathcal{C}_{i,j}))_{1\le i,j\le m}$, where
	\[
\mathcal{C}_{i,j} :=	 \omega_d    \int_{\R^d} \text{\rm Cov} ( u(t_i,\xi),  u(t_j,0) )  d\xi.
	\]
	Recall from \eqref{EQ4} that  
	$F_R(t_i) =\delta( V_{t_i,R})$ for all $i=1,\ldots,m$. 	Then, by a  generalization of  a result of Nourdin, Peccati and  R\'eveillac (see \emph{e.g.} \cite[Theorem 6.1.2]{NP}), we can write
		\begin{equation} \label{equa7}
		\big\vert  \E( h(R^{-d/2}\Phi_R)) -\E (h(\mathbf{G})) \big\vert  \leq \frac{m}{2} \|h ''\|_\infty 
		\sqrt{   \sum_{i,j=1}^m   \E \left( \left|
		 \mathcal{C}_{i,j} - R^{-d} \langle  DF_R(t_i)  \,, V_{t_j, R} \rangle_{\HH} \right|^2
		\right)}
	\end{equation}
	for every $h\in C^2(\R^m)$ with bounded second partial derivatives, where
	 \[
		\|h'' \| _\infty= \max_{1\le i,j \le m}  \sup_{x\in\R^m}  \left| \frac { \partial ^2h (x) } {\partial x_i \partial x_j} \right|;
	\]
	see also Proposition 2.3 in \cite{HNV18}.   Thus,  in view of \eqref{equa7},  in order to show the  convergence in law of $R^{-d/2} \Phi_R$ to $\mathbf{G}$, it suffices to show that for any $i,j =1,\dots, m$, 
	\begin{equation} \label{h6}
	\lim_{R\rightarrow \infty}  \E \left( \left\vert   \mathcal{C}_{i,j} - R^{-d} \langle  DF_R(t_i)  \,, V_{t_j, R} \rangle_{\HH} \right\vert^2
		\right)=0.
		\end{equation}
	Notice that, by the duality relation  \eqref{D:delta} and the convergence \eqref{R1},   we have
	\begin{align} 
	R^{-d}\E \Big( \langle DF_R(t_i) \,,V_{t_j,R}   \rangle_{\HH} \Big) &=  R^{-d}	\E   \Big(   F_R(t_i)  \delta( V_{t_j,R} )  \Big)=  R^{-d} 	\E   \Big(   F_R(t_i)  F_R(t_j)   \Big)   \xrightarrow{R\to+\infty}  \mathcal{C}_{i,j}. \label{h7}
	\end{align}
	Therefore, the convergence \eqref{h6} follows immediately from  \eqref{h7} and  \eqref{finalbdd}.
		Hence the finite-dimensional distributions of
	$ \big\{R^{-d/2} F_R(t): t\in\R_+\big\}$ converge to those of
	$\mathcal{G}$ as $R\to\infty$.   \hfill $\square$\\

\subsection{Tightness via the criterion of   Chentsov-Kolmogorov.}\label{sec43}  In what follow, we appeal to the tightness criterion of Chentsov-Kolmogorov (see \emph{e.g.} \cite{Kunita86}) and we only need to obtain the following moment estimate: For any $p\geq 2$ and $0\leq s<t\leq T \leq R$,
\begin{align}\label{tight}
R^{-d/2} \big\| F_R(t) - F_R(s) \big\| _p \lesssim   (t-s)^{1/d},
\end{align}
where the implicit constant does not depend on $(R, s, t)$.

\begin{proof}[Proof of \eqref{tight}] Recall that
\[
F_R(t) =  \int_{\R_+\times\R^d} \varphi_{t,R}(r,y) \sigma(u_{r,y}) W(dr,dy).
\]
Then by BDG inequality  (see Lemma \ref{BDG}) and Minkowski's inequality,
\begin{align}
 & \big\| F_R(t) - F_R(s) \big\| _p^2  \lesssim  \Bigg\| \int_{\R_+\times\R^{2d}} \big( \varphi_{t,R}(r,y) - \varphi_{s,R}(r,y) \big)  \big( \varphi_{t,R}(r,y') - \varphi_{s,R}(r,y') \big)  \notag  \\
  &\qquad\qquad\qquad \qquad\qquad  \times   \sigma(u(r,y)) \sigma(u(r,y')) \gamma(y-y') dydy'dr \Bigg\|_{p/2}  \notag\\
  & \lesssim  \int_{\R_+\times\R^{2d}} \big| \varphi_{t,R}(r,y) - \varphi_{s,R}(r,y) \big|  \big| \varphi_{t,R}(r,y') - \varphi_{s,R}(r,y') \big|   \notag  \\
   &\qquad\qquad \qquad \times  \big\| \sigma(u(r,y)) \sigma(u(r,y')) \big\|_{p/2} \gamma(y-y') dydy'dr  \notag\\
 &\lesssim  \int_{\R_+\times\R^{2d}} \ \left( \big\vert \varphi_{t,R}(r,y) - \varphi_{s,R}(r,y) \big\vert^2 +   \big\vert \varphi_{t,R}(r,y') - \varphi_{s,R}(r,y') \big\vert^2\right)   \gamma(y-y') dydy'dr \label{s:app}\\
 &=\int_{\R_+\times\R^{2d}}  \big\vert \varphi_{t,R}(r,y) - \varphi_{s,R}(r,y) \big\vert^2        \gamma(y-y') dydy'dr, \notag
 \end{align} 
 where we have used  the following two facts to obtain \eqref{s:app}:
 \begin{itemize}
 \item[(i)] $\| u(r,y)\|_p$ is uniformly bounded on $[0,T]\times\R^d$, \quad (ii) $|ab|\leq \frac{1}{2}(a^2+b^2)$ for any $a,b\in\R$.

 \end{itemize}
 Integrating first with respect to $dy'$ yields, 
 \begin{align*}
 \big\| F_R(t) - F_R(s) \big\| _p^2   \lesssim  \int_{\R_+\times\R^{d}}  \big\vert \varphi_{t,R}(r,y) - \varphi_{s,R}(r,y) \big\vert^2        dydr.
 \end{align*}
 By direct computation (see also \cite[Section 4.3]{BNZ20} for the 2D case and \cite[Equation (4.2)]{DNZ18} for the 1D case),
 \[
  \big\vert \varphi_{t,R}(r,y) - \varphi_{s,R}(r,y) \big\vert^2  \lesssim (t-s)^{2/d}\1_{\{ | y| \leq R+t\}} \leq (t-s)^{2/d}\1_{\{ | y| \leq 2R\}}
 \]
 from which we have
  \begin{align*}
 \big\| F_R(t) - F_R(s) \big\| _p^2   \lesssim  \int_0^t  \int_{\R^{d}}  (t-s)^{2/d}\1_{\{ | y| \leq 2R\}}          dydr \lesssim R^d (t-s)^{2/d}.
 \end{align*}
 This gives us the desired tightness.  \end{proof}

 Combing the results from Sections \ref{sub41}, \ref{sec42} and \ref{sec43},  we can  complete the proof of Theorem \ref{MAIN}.

\medskip

\section{Proof of Theorem \ref{THM}} \label{SEC:Proof12}

 \subsection{Moment estimates for  Picard approximations}  \label{Picard}
 We define $u_0(t,x)=1$ and for $n\geq 0$,
 \[
 u_{n+1}(t,x) = 1 + \int_0^t \int_{\R^2} G_{t-s}(x-y) \sigma\big( u_n(s,y) \big) W(ds, dy).   
 \]
 It is a classic result that $u_n(t,x)$ converges in $L^p(\Omega)$ to $u(t,x)$ uniformly in $x\in\R^d$ for any $p\ge 2$; see \emph{e.g.} \cite[Theorem 4.3]{Dalang}.  
 If  $\sigma(1)=0$, we will end up in the trivial case where $u(t,x)\equiv 1$, in view of the above iteration, which explains the imposed condition \textbf{(C3)}.
 
  We will first derive moment estimates for $u_n(t,x)$.
  By  BDG and Minkowski's  inequalities, we can  write with $n\geq 1$,
   \begin{align*}
 &\| u_n(t,x)\|_p^2 \\
 & \leq  2+ 8p   \int_0^t  ds   \int_{\R^{2d}} dy dy' G_{t-s}(x-y) G_{t-s}(x-y')  \gamma(y-y')   \big\| \sigma ( u_{n-1}(s,y))  \sigma ( u_{n-1}(s,y') )\big\|_{p/2}     \\
 &\le  2+ 8p      \int_0^t  ds   \int_{\R^{2d}} dy dy' G_{t-s}(x-y) G_{t-s}(x-y') \gamma(y-y')    \big\| \sigma ( u_{n-1}(s,y))\big\|_p^2   
 \intertext{since  $ \| \sigma ( u_{n-1}(s,y))  \sigma ( u_{n-1}(s,y') )\|_{p/2}  \leq  \frac{1}{2}\big( \| \sigma ( u_{n-1}(s,y)) \|_{p}^2+   \| \sigma ( u_{n-1}(s,y')) \|_{p}^2\big)$; }
&\leq  2+ 8p      \int_0^t  ds   \int_{\R^{2d}} dy dy' G_{t-s}(x-y) G_{t-s}(x-y') \gamma(y-y')    \Big(  2 \sigma(0)^2+ 2L^2 \big\|  u_{n-1}(s,y) \big\|_p^2\Big).
 \end{align*}
Then, it follows from the estimate \eqref{FA1} that
 \begin{equation} \label{eq11}
 H_n(t) \le c_1 + c_2 \int_0^t  dsH_{n-1}(s),  
 \end{equation}
 where $H_n(t)= \sup_{x\in \R^d} \| u_n(t,x)\|_p^2$,  
 \[
 c_1:= 2+  16p\sigma(0)^2  t \mathfrak{m}_t
\quad  {\rm
  and
 } \quad
 c_2:=  16pL^2  \mathfrak{m}_t.
 \]
 Therefore, by iterating the inequality \eqref{eq11} and taking into account that $H_0(t)=1$, yields
 \[
 H_n(t) \le c_1 \exp(c_2 t).
 \]
That is, we obtain
 \[
  \| u_n(t,x)\|_p^2 \leq \big( 2+  16p\sigma(0)^2  t \mathfrak{m}_t \big) \exp\big( 16pL^2 t  \mathfrak{m}_t  \big).
  \]
  As a consequence,  
  \begin{equation}  \label{sigmap}
  \| \sigma(u_n(t,x)) \|_p \leq  \vert \sigma(0)\vert +  L  \Big( \sqrt{2}+ 4\sqrt{p} | \sigma(0)| \sqrt{ t \mathfrak{m}_t }     \Big) \exp\big( 8pL^2 t  \mathfrak{m}_t  \big)    =: \kappa_{p,t,L}.
  \end{equation}
 
  \subsection{Moment estimates for the  derivative of Picard approximations}
 Now, let us derive moment estimates for the derivative of the  Picard approximations. Our goal in this section is to establish that for $n\geq 4$,
  \begin{equation}  \label{Goalsec}
 \| D_{s,y} u_{n+1}(t,x) \| _p \le C_{p,t,L, \gamma} \kappa_{p,t,L} G_{t-s}(x-y),
 \end{equation}
 where   the constant  $ \kappa_{p,t,L}$ is defined in \eqref{sigmap} and the constant  $C_{p,t,L,\gamma}$ is given by \eqref{def:CPTL} in 1D case and by  \eqref{def:CPTL2} in 2D case.

 \begin{proof}[Proof of \eqref{Goalsec}]
 It is  known that 
for each $n\ge 0$, $u_n(t,x)\in \mathbb{D}^{1,p}$ with 
 \begin{align*}
 D_{s,y} u_{n+1}(t,x) = G_{t-s}(x-y)  \sigma\big( u_n(s,y) \big) + \int_s^t\int_{\R^d} G_{t-r}(x-z)  \Sigma_{r,z}^{(n)} D_{s,y}u_n(r,z) W(dr,dz),
 \end{align*}
 where  $\big\{\Sigma_{s,y}^{(n)}: (s,y)\in\R_+\times\R^d\big\}$ is an adapted random field that is uniformly bounded by $L$, for each $n$.  Now finite  iterations yield (with $r_0=t, z_0=x$)
  \begin{align}
   & D_{s,y} u_{n+1}(t,x)  = G_{t-s}(x-y)  \sigma\big( u_n(s,y) \big)\notag \\
   & \quad +  \int_s^t\int_{\R^d} G_{t-r_1}(x-z_1) \Sigma^{(n)}_{r_1,z_1} G_{r_1 - s}(z_1-y)  \sigma(  u_{n-1}(r_1,z_1) ) W(dr_1,dz_1) \notag \\
   &\qquad +   \sum_{k=2}^n   \int_s^t\cdots\int_s^{r_{k-1}} \int_{\R^{2k}}G_{r_k -s}(z_k-y) \sigma\big( u_{n-k}(r_k,z_k) \big) \notag \\
   &\qquad\qquad  \times \prod_{j=1}^{k} G_{r_{j-1} - r_{j}}(z_{j-1} - z_{j}) \Sigma^{(n+1-j)}_{r_j,z_j}  W(dr_j,dz_j)=:   \sum_{k=0}^n T^{(n)}_k,  \label{finiteit} 
  \end{align}   
 where $T^{(n)}_k$ denotes the $k$th item. For example,  $T_0^{(n)}=G_{t-s}(x-y)  \sigma\big( u_n(s,y) \big)$ and 
 $$
 T^{(n)}_1 = \int_s^t\int_{\R^d} G_{t-r_1}(x-z_1) \Sigma^{(n)}_{r_1,z_1} G_{r_1 - s}(z_1-y)  \sigma(  u_{n-1}(r_1,z_1) ) W(dr_1,dz_1).
 $$
We are going to estimate   $\| T^{(n)}_k \|_p$ for each $k=0, \dots, n$.
 
 \medskip
 \noindent
 {\it Case $k=0$}:  It is clear that
 \begin{equation} \label{k=0}
 \| T^{(n)}_0\|_p  \le  \kappa_{p,t,L} G_{t-s}(x-y),
 \end{equation}
 where $\kappa_{p,t,L}$ is the constant defined in \eqref{sigmap}.  
  
  \medskip
 \noindent
 {\it Case $k=1$}:  Applying  BDG and Minkowski's inequalities, we can write
 \begin{align} \notag
&\| T^{(n)}_1\|_p^2   \leq 4p \Bigg\|  \int_s^t dr_1 \int_{\R^{2d}} dz_1 dz'_1  G_{t-r_1}(x-z_1)G_{t-r_1}(x-z_1') G_{r_1 - s}(z_1-y) G_{r_1 - s}(z_1'-y)\\
&     \times    \label{k=1}
\gamma (z_1-z'_1)\Sigma^{(n)}_{r_1,z_1}  \Sigma^{(n)}_{r'_1,z'_1}  \sigma(  u_{n-1}(r_1,z_1) )  \sigma(  u_{n-1}(r_1,z'_1) )  \Bigg\|_{p/2}  \leq  4p L^2\kappa_{p,t,L}^2  \mathbf{K}_{s,t}(x,y),  
  \end{align}
  where
  \begin{align}
 \mathbf{K}_{s,t}(x,y)= \int_s^t dr  \int_{\R^{2d}} g_r(z)  (g_r\ast\gamma)(z) dz \label{eq:1w}
\end{align}
with the notation $g_r (z)= G_{t-r}(x-z) G_{r - s}(z-y) $.

\medskip
 \noindent
 {\it Case} $2\le k\le n$:
 We can write 
  \[
T^{(n)}_k =   \int_s^t\int_{\R^d}   G_{t-r_1}(x-z_1) \Sigma^{(n)}_{r_1,z_1} N_{r_1,z_1} W(dr_1, dz_1)
  \]
  with  
  \begin{align*}
  N_{r_1,z_1}&=
 \int_{ s<r_k < \cdots <r_2<r_1} \int_{\R^{(2k-2)d}}G_{r_k -s}(z_k-y) \sigma\big( u_{n-k}(r_k,z_k) \big)  \\
 & \qquad \times     \prod_{j=2}^{k} G_{r_{j-1} - r_{j}}(z_{j-1} - z_{j}) \Sigma^{(n+1-j)}_{r_j,z_j}  W(dr_j,dz_j),
  \end{align*}
which is clearly $\mathscr{F}_{r_1}$-measurable.     Then, by BDG   inequality,
we obtain
  \begin{align}
  &  \big\| T^{(n)}_k \big\|_p^2   \le 4p   \Bigg\|  \int_s^t dr_1      \int_{\R^{2d}} G_{t-r_1}(x-z_1) \Sigma^{(n)}_{r_1,z_1} N_{r_1,z_1}G_{t-r_1}(x-z'_1) \Sigma^{(n)}_{r_1,z'_1} N_{r_1,z'_1} \notag   \\
     &\qquad\qquad\qquad\qquad \times  \gamma( z_1' - z_1)  dz_1dz_1'     \Bigg\|_{p/2} \notag \\
 &\leq 4pL^2 \Bigg\|  \int_s^t dr_1      \int_{\R^{2d}} G_{t-r_1}(x-z_1)   N^2_{r_1,z_1}G_{t-r_1}(x-z'_1)      \gamma( z_1' - z_1)  dz_1dz_1'     \Bigg\|_{p/2}  \label{ineq:ab}\\
  &\leq 4pL^2  \int_s^t dr_1      \int_{\R^{2d}} G_{t-r_1}(x-z_1)  G_{t-r_1}(x-z'_1)    \| N_{r_1,z_1} \|_p^2   \gamma( z_1' - z_1)  dz_1dz_1'  ,   \notag
   \end{align}
where we used $|ab| \leq \frac{a^2+b^2}{2}$ in  the second inequality and we applied Minkowski's inequality in the last step.

  Now we can iterate the above process to obtain
\begin{align*}      
  \big\| T^{(n)}_k \big\|_p^2     &\leq (4pL^2)^{k-1} \int_s^t dr_1\int_s^{r_1}\cdots\int_{s}^{r_{k-2} } dr_{k-1} \int_{\R^{2dk-2d}} dz_1\cdots dz_{k-1} dz_1' ...dz_{k-1}' \\
  &\quad \times \left(\prod_{j=0}^{k-2}  G_{r_j - r_{j+1}}(z_j  -z_{j+1})  G_{r_j - r_{j+1}}(z_j  -z'_{j+1})  \gamma(z_{j+1} -z'_{j+1})    \right)    \big\| \wh{N}_{r_{k-1}, z_{k-1}} \big\|_p^2,
 \end{align*}      
          where $z_0=x, r_0=t$ and $\wh{N}_{r_{k-1}, z_{k-1}} $ is given by 
     \begin{align*}
 \widehat{N}_{r_{k-1},z_{k-1}} &:=  \int_{[ s, r_{k-1} ]\times\R^d}  W(dr_k, dz_k) \sigma\big( u_{n-k}(r_k,z_k)  \big)G_{r_{k-1}-r_k}(z_{k-1}-z_k) \\
 & \qquad \times \Sigma^{(n+1-k)}_{r_k,z_k} G_{r_k-s}(z_k-y).
   \end{align*}
By the same arguments that led to \eqref{k=1},  we have 
\[
     \big\| \wh{N}_{r_{k-1}, z_{k-1}} \big\|_p^2  \leq 4pL^2 \kappa_{p,t,L}^2  \mathbf{K}_{s, {r_{k-1}} }(x,y) ,
     \]
     which implies
     \begin{align}       \notag
  \big\| T^{(n)}_k \big\|_p^2     &\leq (4pL^2)^{k}  \kappa_{p,t,L}^2 \int_s^t dr_1\int_s^{r_1}\cdots\int_{s}^{r_{k-2} } dr_{k-1} \int_{\R^{2dk-2d}} dz_1\cdots dz_{k-1} dz_1' ...dz_{k-1}' \\
  &\quad \times \left(\prod_{j=0}^{k-2}  G_{r_j - r_{j+1}}(z_j  -z_{j+1})  G_{r_j - r_{j+1}}(z_j  -z'_{j+1})  \gamma(z_{j+1} -z'_{j+1})    \right)  \mathbf{K}_{s, {r_{k-1}} }(x,y).   \label{EQ15}
 \end{align}     
 
 To complete the estimation of the quantities    $\| T^{(n)}_k \|_p$ for $k=1, \dots,n$,  
  we consider separately the cases $d=1$ and $d=2$.
             
     \medskip
     
     \noindent
     {\bf Case $d=1$}:   In this case, $G_{t-r}(x-z) =\frac{1}{2}\1_{\{ |x-z |<t-r \}} $, so that,  using  the integrability of $\gamma$ and \eqref{G1}  yields
\begin{align}
 \mathbf{K}_{s,t}(x,y) &\leq  \frac{1}{4}\1_{\{ |x-y|<t-s\} } \| \gamma\|_{L^1(\R)}\int_s^t dr \int_{\R} dzG_{t-r}(x-z)  G_{r - s}(z-y)     \notag \\
 &\leq  \frac{1}{8}\1_{\{ |x-y|<t-s\} } \| \gamma\|_{L^1(\R)}(t-s)^2 G_{t-s}(x-y)    \notag \\
& \leq  \frac{t^2 \| \gamma\|_{L^1(\R)}  }{8} G_{t-s}(x-y)   . \label{ineq:d=1} 
\end{align}
Plugging this bound into \eqref{k=1} yields
 \begin{equation}  \label{ineq:k=d=1}
   \| T^{(n)}_1\|_p  \leq   tL \kappa_{p,t,L} \sqrt{p   \| \gamma\|_{L^1(\R)} }   G_{t-s}(x-y). 
  \end{equation}
  For $k=2,\dots, n$, from \eqref{EQ15} and \eqref{ineq:d=1}, we obtain
\begin{align}      
  \big\| T^{(n)}_k \big\|_p^2   &  \leq  \frac{ \| \gamma\|_{L^1(\R)}}{8} (4pL^2)^{k} t^2 \kappa_{p,t,L}^2   \int_s^t dr_1\cdots\int_{s}^{r_{k-2} } dr_{k-1} \notag \\
   & \quad \times \int_{\R^{2k-2}} dz_1...dz_{k-1} dz_1' ...dz_{k-1}'  G_{r_{k-1} - s}(z_{k-1}-y) \notag \\
  &  \quad \times \left(\prod_{j=0}^{k-2}  G_{r_j - r_{j+1}}(z_j  -z_{j+1})  G_{r_j - r_{j+1}}(z_j  -z'_{j+1})  \gamma(z_{j+1} -z'_{j+1})    \right).   \label{ineq:bdd=1}
 \end{align}  
 In the particular case $k=2$,   we obtain
\begin{align*}
  &\quad  \int_s^t dr \int_{\R^2}dz  dz' G_{t-r}(x-z)G_{t-r}(x-z')  \gamma(z-z')G_{r-s}(z-y) \\
   & \leq \frac{1}{4} \1_{\{ |x-y| < t-s\}}  \int_s^t dr \int_{\R^2}dz  dz'  G_{t-r}(x-z')  \gamma(z-z')   \leq   \frac{ \| \gamma\|_{L^1(\R)}}{4}G_{t-s}(x-y)    (t-s)^2,
\end{align*}
   which yields
 \begin{align}\label{bdd:k=2,d=1}  
    \big\| T^{(n)}_2\big\|_p   \leq      \Big(  t^2   \| \gamma\|_{L^1(\R)}  pL^2 \kappa_{p,t,L}   \Big)  G_{t-s}(x-y).
   \end{align}
  For $ 3\leq k\leq n$, we rewrite the spatial integral in   \eqref{ineq:bdd=1}  as
 \begin{align*}
&  \int_{\R^{2k-2}} dz_1...dz_{k-1} dz_1' \cdots dz_{k-1}'  \left(\prod_{j=0}^{k-2}  \1_{  \Big\{  \substack{ |z_j  -z_{j+1}| < r_j - r_{j+1}  \\   |z_j  -z'_{j+1}| < r_j - r_{j+1}    }  \Big\} }     \gamma(z_{j+1} -z'_{j+1})    \right) \frac{ \1_{\{      | z_{k-1}-y| < r_{k-1} - s \}   }  }{2^{2k-1}}  \\
 &\leq  \frac{ \1_{\{      | x-y| < t - s \}   }  }{2^{2k-1}} \int_{\R^{2k-4}} dz_2dz_2' \cdots dz_{k-1} dz'_{k-1}   \1_{  \big\{  \substack{ |x  -z_{2}| < t - r_2 \\   |x  -z'_2| < t - r_2    }  \big\} }
    \Bigg( \prod_{j=2}^{k-2}  \1_{  \big\{  \substack{ |z_j  -z_{j+1}| < r_j - r_{j+1}  \\   |z_j  -z'_{j+1}| < r_j - r_{j+1}    }  \big\} }  \\ 
     &\qquad \times \gamma(z_{j+1} -z'_{j+1})    \Bigg)   \gamma(z_2-z_2')   \int_{\R^2}dz_1dz_1'    \gamma(z_1 -z'_1 )\1_{   \{   |x  -z_{1}| < t - r_1        \} } 
   \end{align*}
   Note that ${\displaystyle
\int_{\R^2}dz_1dz_1'    \gamma(z_1 -z'_1 )\1_{   \{   |x  -z_{1}| < t - r_1        \} }  \leq 2t \| \gamma\|_{L^1(\R)} }$
  and then we can iterate the above process to deduce that the  spatial integral in   \eqref{ineq:bdd=1} can be bounded by 
  \[
   \frac{ \1_{\{      | x-y| < t - s \}   }  }{2^{2k-1}} \big(2t \| \gamma\|_{L^1(\R)}  \big)^{k-1} =  \frac{ \1_{\{      | x-y| < t - s \}   }  }{2^k}  \big(t \| \gamma\|_{L^1(\R)} \big)^{k-1}.
  \]
Thus, from  \eqref{ineq:bdd=1},
   \begin{align*}      
  \big\| T^{(n)}_k \big\|_p^2    
  &\leq \frac{1}{8} \kappa_{p,t,L}^2  \frac{ \big(2pL^2 t^2    \| \gamma\|_{L^1(\R)}  \big)^k }{(k-1)!}  \1_{\{      | x-y| < t - s \} }.
 \end{align*}   
 That is, 
 \begin{align} \label{bdd:d=1k>2}
  \big\| T^{(n)}_k \big\|_p \leq  \kappa_{p,t,L}  \frac{ \big(2pL^2 t^2    \| \gamma\|_{L^1(\R)}  \big)^{k/2} }{\sqrt{(k-1)!}} G_{t-s}(x-y).
  \end{align}
Now combining the estimates in \eqref{k=0},  \eqref{ineq:k=d=1}, \eqref{bdd:k=2,d=1} and \eqref{bdd:d=1k>2} yields 
\[
\big\| D_{s,y} u_{n+1}(t,x) \big\|_p \leq \sum_{k=0}^n  \big\| T^{(n)}_k \big\|_p \leq \kappa_{p,t,L} C_{p,t,L,\gamma} G_{t-s}(x-y),
\]
with 
\begin{align}\label{def:CPTL}
C_{p,t,L,\gamma} : = 1+\sum_{k=1}^\infty  \frac{ \big(2pL^2 t^2   \| \gamma\|_{L^1(\R)}  \big)^{k/2} }{\sqrt{(k-1)!}}.
\end{align}

\medskip

\noindent
{\bf Case $d=2$}:  Recall  $G_{t-r}(x-z) =\frac{1}{2\pi} \big[ (t-r)^2 - |x-z|^2 \big]^{-1/2}     \1_{\{ |x-z|<t-r \}} $ and 
 \begin{align}
 \mathbf{K}_{s,t}(x,y) = \int_s^t dr  \int_{\R^{2d}} g_r(z)  (g_r\ast\gamma)(z) dz 
\end{align}
with   $g_r (z)= G_{t-r}(x-z) G_{r - s}(z-y) $, see \eqref{eq:1w}.  By H\"older's inequality and Young's inequality, we obtain
\begin{align*}
\int_{\R^{2d}} g_r(z)  (g_r\ast\gamma)(z) dz  \le  \| g_r\|_{L^{2q}(\R^2)} \big\|g_r\ast\gamma \big\|_{L^{\frac{2q}{2q-1}}(\R^2)   } \leq  \| g_r\|_{L^{2q}(\R^2)}^2 \| \gamma\|_{L^{\pmb{\ell} }(\R^2)}, 
\end{align*}
where $q := \frac{\pmb{\ell}}{2\pmb{\ell}-1}\in (1/2, 1)$. Therefore, 
\begin{align} \notag
\mathbf{K}_{s,t}(x,y) & \leq  \| \gamma\|_{L^{\pmb{\ell}}(\R^2)} \int_s^t dr  \big( G_{t-r}^{2q} \ast G_{r-s}^{2q} \big)^{1/q}(x-y) \\
& \le  C_{\LL} (t-s)^{ (\LL-1)/\pmb{\ell}  }  \| \gamma\|_{L^{\LL}(\R^2)} G_{t-s}^{  1/\pmb{\ell}   }(x-y), \label{EQ16}
\end{align}
where the last inequality  follows from Lemma \ref{LEM51} and here and in the rest of the paper, $C_{\LL}$ will denote a generic constant that only depends on $\LL$ and may vary from line to line.

Then we deduce from \eqref{k=1} that
   \begin{align}  \label{ineq:k=1d=2}
 \| T^{(n)}_1\|_p  \leq    C_{\LL}  L \kappa_{p,t,L} \sqrt{p \| \gamma\|_{L^{\LL}(\R^2)} }  (t-s)^{  \frac{\pmb{\ell}-1}{2 \pmb{\ell}}}   G^{\frac{1}{2\pmb{\ell}}}_{t-s}(x-y). 
  \end{align} 
  Note that $G^{\frac{1}{2\pmb{\ell}}}_{t-s}(x-y) \leq   (2\pi)^{ 1-\frac 1{2\LL}} (t-s)^{1 - \frac{1}{2\LL}} G_{t-s}(x-y)$. Therefore, from \eqref{ineq:k=1d=2} we can write 
  \begin{align} \label{casek=1}
   \| T^{(n)}_1\|_p  \leq   C_{\LL}  L \kappa_{p,t,L} \sqrt{p \| \gamma\|_{L^{\LL}(\R^2)} }  t^{  \frac{3\pmb{\ell}-2}{2 \pmb{\ell}}}    G_{t-s}(x-y).
\end{align}

   Consider now the case $k\in\{2,\dots,n\}$. We have, from \eqref{EQ15} and  \eqref{EQ16}
  \begin{align}      
 \big\| T^{(n)}_k \big\|_p^2   &  \leq   (4pL^2)^{k}  \kappa_{p,t,L}^2  (t-s)^{\frac{\pmb{\ell}-1}{\pmb{\ell}}} \| \gamma\|_{L^{\pmb{\ell}}(\R^2)}  \int_s^t dr_1\cdots\int_{s}^{r_{k-2} } dr_{k-1} \notag \\
 &\quad \times \int_{\R^{4k-4}} dz_1...dz_{k-1} dz_1' ...dz_{k-1}'  G^{1/\pmb{\ell}}_{r_{k-1} - s}(z_{k-1}-y) \notag \\
 &\quad \times \left(\prod_{j=0}^{k-2}  G_{r_j - r_{j+1}}(z_j  -z_{j+1})  G_{r_j - r_{j+1}}(z_j  -z'_{j+1})  \gamma(z_{j+1} -z'_{j+1})    \right) . \label{ineq:bdd=2}
 \end{align}
For $k=2$, we deduce from \eqref{ineq:bdd=2} 
\begin{align}\label{ineq:k=d=2}
 \big\| T^{(n)}_2 \big\|_p^2 & \leq   (4pL^2)^{2}  \kappa_{p,t,L}^2  (t-s)^{\frac{\pmb{\ell}-1}{\pmb{\ell}}} \| \gamma\|_{L^{\pmb{\ell}}(\R^2)} \wh{ \mathbf{K}}_{s,t}(x-y),
\end{align}  
with 
\begin{align}\label{def:Khat}
\wh{ \mathbf{K}}_{s,t}(x-y):=  \int_s^t dr \int_{\R^4}dzdz'   G^{1/\pmb{\ell}}_{r - s}(z-y)      G_{t -r}(x -z)  G_{t -  r }(x  -z')  \gamma(z -z').   
\end{align}
We can write, with $h_r(z):= G_{t-r}(x-z)G_{r-s}^{1/\LL}(z-y)$ and $q=\frac{\pmb{\ell}}{2\pmb{\ell}-1}$ ,
\begin{align*}
\wh{ \mathbf{K}}_{s,t}(x-y) &=  \int_s^t dr \int_{\R^2}dz'     G_{t -  r }(x  -z')  (\gamma\ast h_r)(z') \\
&\leq    \int_s^t dr \big\| G_{t-r}\big\|_{L^{2q}(\R^2)} \big\| \gamma \big\|_{L^{\LL}(\R^2)} \big\| h_r \big\|_{L^{2q}(\R^2)},
\end{align*}
where the last inequality follows from H\"older's inequality and Young's convolution inequality.

By direct computation, $\big\| G_{t-r}\big\|_{L^{2q}(\R^2)}  = \left(  \frac{(2\pi)^{1-2q}  }{2-2q} \right)^{\frac{1}{2q}} (t-r)^{\frac{1-q}{q}}$. Then,  
\begin{align*}
 \wh{ \mathbf{K}}_{s,t}(x-y) & \leq   \left(  \frac{(2\pi)^{1-2q}  }{2-2q} \right)^{\frac{1}{2q}} \big\| \gamma \big\|_{L^{\LL}(\R^2)}  t^{\frac{1-q}{q}}   \int_s^t dr  \sqrt{   \big( G_{t-r}^{2q} \ast G_{r-s}^{2q/\LL}  \big)^{1/q}(x-y)   }   \\
 &\leq  \left(  \frac{(2\pi)^{1-2q}  }{2-2q} \right)^{\frac{1}{2q}} \big\| \gamma \big\|_{L^{\LL}(\R^2)}  t^{\frac{2-q}{2q}}    \left( \int_s^t dr   \big( G_{t-r}^{2q} \ast G_{r-s}^{2q/\LL}  \big)^{1/q}(x-y)   \right)^{\frac{1}{2}}, 
\end{align*}
where we used the Jensen's inequality for finite measure in the last estimate.  Using $G_{r-s}^{2q/\LL}(z-y) \leq (2\pi)^{\frac{2\LL-2}{2\LL-1} } r^{\frac{\LL-1}{2\LL-1}} G_{r-s}^{2q}(z-y)$ and applying Lemma \ref{LEM51}, we  obtain 
\begin{align*}
 \int_s^t dr   \big( G_{t-r}^{2q} \ast G_{r-s}^{2q/\LL}  \big)^{1/q}(x-y)  &  \leq (2\pi)^{\frac{2\LL-2}{\LL}} t^{\frac{\LL-1}{\LL}} \int_s^t dr   \big( G_{t-r}^{2q} \ast G_{r-s}^{2q}  \big)^{1/q}(x-y) \\
 &\leq C_{\LL} t^{\frac{2\LL-2}{\LL}} G^{1/\LL}_{t-s}(x-y).
\end{align*}
 Therefore,
\begin{align}
 \wh{ \mathbf{K}}_{s,t}(x-y)   \leq  C_{\LL}  \big\| \gamma \big\|_{L^{\LL}(\R^2)}  t^{\frac{2-q}{2q}}    \left( t^{\frac{2\LL-2}{\LL}} G^{1/\LL}_{t-s}(x-y)   \right)^{\frac{1}{2}} = C_{\LL}  \big\| \gamma \big\|_{L^{\LL}(\R^2)}   t^{\frac{5\LL-4}{2\LL}} G^{\frac{1}{2\LL}}_{t-s}(x-y)  \label{EST:Khat}
\end{align}
from which, together with \eqref{ineq:k=d=2}, we obtain
\begin{align}
\big\| T_2^{(n)} \big\|_p &\leq C_{\LL}   \left(   t^{\frac{7\LL-6}{2\LL}} \big(4pL^2 \kappa_{p,t,L}  \| \gamma  \|_{L^{\LL}(\R^2)} \big)^2 G^{\frac{1}{2\LL}}_{t-s}(x-y) \right)^{1/2} \notag \\
&\leq C_{\LL}     t^{\frac{7 \LL-6}{4\LL}}  4pL^2 \kappa_{p,t,L}  \| \gamma  \|_{L^{\LL}(\R^2)}   G_{t-s}(x-y), \label{QQ1}
\end{align}
where we used $G^{\frac{1}{4\LL}}_{t-s}(x-y)  \leq 2\pi t^{1- \frac{1}{4\LL}} G_{t-s}(x-y)$ to obtain the last estimate.  
 
\medskip

For $k\in\{3,\dots ,n\}$, we first point out that  the following integral
\begin{align*}
&\int_s^{r_{k-2}}dr_{k-1} \int_{\R^4} dz_{k-1} dz_{k-1}'  G^{1/\pmb{\ell}}_{r_{k-1} - s}(z_{k-1}-y)  G_{r_{k-2} - r_{k-1}}(z_{k-2} -z_{k-1})  \\
&\qquad \times G_{r_{k-2} - r_{k-1}}(z_{k-2} -z'_{k-1})       \gamma(z_{k-1} -z'_{k-1})  
\end{align*}
is exactly $\wh{\mathbf{K}}_{s, r_{k-2}}(z_{k-2} -y)$, see \eqref{def:Khat}. This is bounded by $ C_{\LL}  \big\| \gamma \big\|_{L^{\LL}(\R^2)}   t^{\frac{5\LL-4}{2\LL}} G^{\frac{1}{2\LL}}_{r_{k-2}-s}(z_{k-2}-y) $, in view of $r_{k-2} \leq t$ and \eqref{EST:Khat}.

 Then, we have 
  \begin{align*}       
 \big\| T^{(n)}_k \big\|_p^2  & \leq  C_{\LL} (4pL^2)^{k}  \kappa_{p,t,L}^2  t^{\frac{7\LL-6}{2\LL}  } \| \gamma\|_{L^{\pmb{\ell}}(\R^2)}^2 \int_s^t dr_1\cdots\int_{s}^{r_{k-3} } dr_{k-2} \int_{\R^{4k-8}} dz_1...dz_{k-2} dz_1' ...dz_{k-2}'   \\
 & \quad  \times \left( \prod_{j=0}^{k-3}  G_{r_j - r_{j+1}}(z_j  -z_{j+1})  G_{r_j - r_{j+1}}(z_j  -z'_{j+1})  \gamma(z_{j+1} -z'_{j+1})  \right)  G_{r_{k-2}-s}^{\frac{1}{2\LL}} (z_{k-2}-y).
 \end{align*}
Similar to the estimation of $\wh{\mathbf{K}}_{s,t}$, we write with  $\wt{h}_r(z):= G_{t-r}(x-z)G_{r-s}^{\frac{1}{2\LL}}(z-y)$ and $q =\frac{\LL}{2\LL-1}$,
\begin{align*}
\wt{\mathbf{K}}_{s,t}(x-y) :&= \int_s^t dr \int_{\R^2} G_{t-r}(x-z') \big(\gamma\ast  \wt{h}_r  \big)(z') \leq    \int_s^t dr \big\| G_{t-r}\big\|_{L^{2q}(\R^2)} \big\| \gamma \big\|_{L^{\LL}(\R^2)} \big\| \wt{h}_r \big\|_{L^{2q}(\R^2)} \\
&\leq  C_{\LL} t^{\frac{1-q}{q}} \| \gamma\|_{L^{\LL}(\R^2)} \int_s^t dr \left(  \int_{\R^2}G^{2q}_{t-r}(x-z)G_{r-s}^{q/\LL}(z-y) dz \right)^{\frac{1}{2q}}.
\end{align*}
Since $2q + \frac{q}{\LL} = \frac{2\LL+1}{2\LL-1} < 3$, we can apply Lemma \ref{LEM51} to write
\[
 \left(  \int_s^t dr  \int_{\R^2}G^{2q}_{t-r}(x-z)G_{r-s}^{q/\LL}(z-y) dz \right)^{\frac{1}{2q}} \leq C_{\LL} t^{\frac{2\LL-2}{\LL}} \mathbf{1}_{\{ |x-y| < t-s \}}.
 \]
Thus,
\begin{align}\label{EST:Ktilde}
\wt{\mathbf{K}}_{s,t}(x-y) \leq     C_{\LL} t^{\frac{3\LL-3}{\LL}}  \| \gamma\|_{L^{\LL}(\R^2)}   \mathbf{1}_{\{ |x-y| < t-s \}}.
\end{align}
From this estimate, we deduce 
  \begin{align}       
 \big\| T^{(n)}_3 \big\|_p  & \leq  C_{\LL}  \sqrt{ (4pL^2)^{3}  \kappa_{p,t,L}^2  t^{\frac{7\LL-6}{2\LL}  } \| \gamma\|_{L^{\pmb{\ell}}(\R^2)}^2 \wt{\mathbf{K}}_{s,t}(x-y) } \notag  \\
 &\leq C_{\LL} \big( 4pL^2   \| \gamma\|_{L^{\pmb{\ell}}(\R^2)} \big)^{3/2} \kappa_{p,t,L}  t^{\frac{13\LL-12}{4\LL}  } \mathbf{1}_{\{ |x-y| < t-s \}} \notag \\
 &\leq C_{\LL} \big(4 pL^2   \| \gamma\|_{L^{\pmb{\ell}}(\R^2)} \big)^{3/2} \kappa_{p,t,L}  t^{\frac{17\LL-12}{4\LL}  }  G_{t-s}(x-y), \label{QQ2}
 \end{align}
where we also used the fact $\mathbf{1}_{\{ |x-y| < t-s \}} \leq 2\pi t G_{t-s}(x-y)$.

\smallskip

For $4\leq k\leq n$, we write
  \begin{align*}       
 &\big\| T^{(n)}_k \big\|_p^2   \leq   C_{\LL}(4pL^2)^{k}  \kappa_{p,t,L}^2  t^{\frac{7\LL-6}{2\LL}  } \| \gamma\|_{L^{\pmb{\ell}}(\R^2)}^2 \int_s^t dr_1\cdots\int_{s}^{r_{k-4} } dr_{k-3} \int_{\R^{4k-12}} dz_1...dz_{k-3} dz_1' ...dz_{k-3}'   \\
 & \quad  \times \left( \prod_{j=0}^{k-4}  G_{r_j - r_{j+1}}(z_j  -z_{j+1})  G_{r_j - r_{j+1}}(z_j  -z'_{j+1})  \gamma(z_{j+1} -z'_{j+1})  \right)  \wt{\mathbf{K}}_{s, r_{k-3}}(z_{k-3}-y)  \\
  &\leq  C_{\LL} (4pL^2)^{k}  \kappa_{p,t,L}^2  t^{\frac{13\LL-12}{2\LL}  } \| \gamma\|_{L^{\pmb{\ell}}(\R^2)}^3 \1_{\{ |x-y| <t-s\}} \int_s^t dr_1\cdots\int_{s}^{r_{k-4} } dr_{k-3}    \\
 &    \times\int_{\R^{4k-12}} dz_1 \cdots dz_{k-3} dz_1'  \cdots dz_{k-3}' \left( \prod_{j=0}^{k-4}  G_{r_j - r_{j+1}}(z_j  -z_{j+1})  G_{r_j - r_{j+1}}(z_j  -z'_{j+1})  \gamma(z_{j+1} -z'_{j+1})  \right)  
  \end{align*}
  using \eqref{EST:Ktilde}. Now we can perform integration  inductively with respect to $dz_{k-3}dz'_{k-3}$, \dots , $dz_1dz_1'$ and we get
  \begin{align*}
 & \int_{\R^{4k-12}} dz_1 \cdots dz_{k-3} dz_1' ...dz_{k-3}'  \prod_{j=0}^{k-4}  G_{r_j - r_{j+1}}(z_j  -z_{j+1})  G_{r_j - r_{j+1}}(z_j  -z'_{j+1})  \gamma(z_{j+1} -z'_{j+1})     \leq \mathfrak{m}_t^{k-3}
  \end{align*}
  so that
    \begin{align}       
 \big\| T^{(n)}_k \big\|_p    &\leq  C_{\LL} \sqrt{ (4pL^2)^{k}  \kappa_{p,t,L}^2  t^{\frac{13\LL-12}{2\LL}  } \| \gamma\|_{L^{\pmb{\ell}}(\R^2)}^3 \1_{\{ |x-y| <t-s\}}  \frac{ (t \mathfrak{m}_t)^{k-3}}{(k-3)!} } \notag  \\
 &\leq C_{\LL} (4pL^2)^{k/2} \kappa_{p,t,L}   t^{\frac{17\LL-12}{4\LL}  } \| \gamma\|_{L^{\pmb{\ell}}(\R^2)}^{3/2} \sqrt{ \frac{ (t \mathfrak{m}_t)^{k-3}}{(k-3)!} } G_{t-s}(x-y). \label{QQ3}
  \end{align} 
  Combining \eqref{k=0}, \eqref{casek=1}, \eqref{QQ1}, \eqref{QQ2} and  \eqref{QQ3} yields
 \begin{equation}  \label{Conq1}
 \| D_{s,y} u_{n+1}(t,x) \| _p \le C_{p,t,L, \gamma} \kappa_{p,t,L} G_{t-s}(x-y),
 \end{equation}
 with
\begin{align} 
C_{p,t,L,\gamma} : &= 1 +  C_{\LL} L t^{\frac{3\LL-2}{2\LL}} \sqrt{p \| \gamma \|_{L^{\LL}(\R^2)}}  + C_{\LL} pL^2   t^{\frac{7\LL-6}{4\LL}} \| \gamma\|_{L^{\LL}(\R^2)} \notag  \\
 &\qquad \qquad + C_{\LL}  t^{\frac{17\LL-12}{4\LL}}  \big(pL^2 \| \gamma \|_{L^{\LL}(\R^2) } \big)^{3/2}     \sum_{k=0}^\infty     \frac{ \big(4pL^2  t\mathfrak{m}_t \big)^{k/2}}{\sqrt{k!}}. \label{def:CPTL2}
\end{align}
 This concludes the proof of \eqref{Goalsec}.
 \end{proof}
 
\subsection{Proof of  Theorem \ref{THM}}
We can now proceed with the proof of  Theorem \ref{THM}. We first apply  Minkowski's inequality  and  \eqref{Goalsec}  to obtain
\begin{align*}
\big\| Du_{n+1}(t,x) \big\|^2_{L^p(\Omega; \HH)} & \leq \int_0^t dr \int_{\R^{2d}} dzdy \gamma(z-y) \big\| D_{s,y}u_{n+1}(t,x) D_{s,z}u_{n+1}(t,x)  \big\|_{p/2} \\
&\lesssim  \int_0^t dr \int_{\R^{2d}} dzdy\gamma(z-y) \big\| D_{s,y}u_{n+1}(t,x)\big\|_p \big\| D_{s,z}u_{n+1}(t,x)  \big\|_{p} \\
&\lesssim \int_0^t dr \int_{\R^{2d}} dzdy\gamma(z-y)  G_{t-s}(x-y) G_{t-s}(x-z),
\end{align*}
which is uniformly bounded.  Then standard Malliavin calculus arguments imply that  up to a subsequence $Du_{n_k}(t,x)$ converges to $Du_{t,x}$ with respect to the weak topology on $L^p(\Omega; \HH)$; see \emph{e.g.} \cite{MilletMarta}.  Similarly, for any $q\in(1,2)$,
\begin{align*}
&\big\| Du_{n+1}(t,x) \big\|^p_{L^p(\Omega; L^q(\R_+\times\R^d) )}  = \left\|  \int_{\R_+\times\R^{2d}} dsdy  \big\vert D_{s,y}u_{n+1}(t,x) \big\vert^{q}  \right\|_{p/q}^{p/q} \\
&\lesssim   \left( \int_{\R_+\times\R^{2d}} dsdy  \big\| D_{s,y}u_{n+1}(t,x) \big\|_p^{q}  \right)^{p/q}  \lesssim   \left( \int_{\R_+\times\R^{2d}} dsdy  G_{t-s}^q(x-y)  \right)^{p/q} \lesssim1.
\end{align*}
So $\big\{Du_{n_k}(t,x)\big\}$ has a further subsequence that converges to the same limit $Du_{t,x}$ with respect to the weak topology on $L^p\big(\Omega; L^q(\R_+\times\R^d) \big)$ and as a result, for $1< q < 2 \leq p<\infty$ and for any finite $T$,
\begin{align*}
\sup_{(t,x)\in[0,T]\times\R^d} \left\| \int_{\R_+\times\R^d} \big\vert  D_{s,y} u_{t,x} \big\vert^{q} dy ds \right\|_{p/q} <\infty.
\end{align*}
Therefore, following exactly the same lines in the proof of \cite[Theorem 1.2]{BNZ20} (step 4 therein), we can get the upper bound in \eqref{IMP}. The lower bound is straightforward in light of the formula of Clark-Ocone (Lemma \ref{CO}).  \hfill $\square$

 \bigskip
 
 \noindent\textbf{Acknowledgment:}  D. Nualart is supported by NSF Grant DMS 1811181.

 \end{document}